\newtheorem{Thm}{Theorem}[section]
\newtheorem{Lem}[Thm]{Lemma}
\newtheorem{Coro}[Thm]{Corollary}
\newtheorem{Rem}[Thm]{Remark}
\numberwithin{equation}{section}
\newcommand{\1}{\mathbf{1}}
\newcommand{\R}{\mathbb{R}}
\renewcommand{\P}{\mathbf{P}}
\newcommand{\E}{\mathcal{E}}
\newcommand{\D}{\mathcal{D}}
\newcommand{\ve}{\varepsilon}
\newcommand{\vt}{\vartheta}
\newcommand{\dis}{\displaystyle}
\newcommand{\pa}{\partial}
\newcommand{\na}{\nabla}
\newcommand{\de}{\delta}
\renewcommand{\>}{\rangle}
\newcommand{\T}{\mathbb{T}}
\newcommand{\I}{\mathbf{I}}
\renewcommand{\P}{\mathbf{P}}
\title{The Landau and Non-cutoff Boltzmann Equation in Union of Cubes}
\author[D.-Q. Deng]{Dingqun Deng    
}
\address{Beijing Institute of Mathematical Sciences and Applications, Tsinghua Univeristy, Beijing, People's Republic of China}
\email{dingqun.deng@gmail.com}
\begin{document}
		
	\keywords{Kinetic Theory, Boundary problem, Landau equation, Boltzmann equation, Global solutions.}
	\subjclass[2020]{35Q20, 76P05, 82C40.}

	\begin{abstract}
		The existence and stability of collisional kinetic equation, especially non-cutoff Boltzmann equation, in bounded domain with physical boundary condition is longstanding open problem. This work proves the global stability of the Landau equation and non-cutoff Boltzmann equation in union of cubes with the specular reflection boundary condition when an initial datum is near Maxwellian.
		Moreover, the solution enjoys exponential large-time decay in bounded domain. 
		 Our method is based on that fact that normal derivatives in cubes is also derivatives along axis, which allows us to obtain high-order derivative estimates.

	\end{abstract}
\maketitle

	\tableofcontents
	
\section{Introduction}

\subsection{Equation and Domain}
	Boundary effects play a crucial role in the dynamics of collisional kinetic equation:
	\begin{align*}
		\pa_t F +v\cdot\na_x F =Q(F,F), \quad F(0,x,v)=F_0(x,v), 
	\end{align*}
where $F(t,x,v)$ denotes the particle distribution at time $t\ge 0$, position $x\in\Omega$ and velocity $v\in\R^3$ and $F_0$ denotes its initial datum. Throughout the paper, we consider the following two kinds of kinetic collision operator. 

\subsubsection{Landau collision operator} 
	For Landau collision operator, $Q$ is given by 
	\begin{align*}
		Q(G,F)&=\nabla_v\cdot\int_{\R^3}\phi(v-v')\big[G(v')\nabla_vF(v)-F(v)\nabla_vG(v')\big]\,dv'\\
		&=\sum^3_{i,j=1}\partial_{v_i}\int_{\R^3}\phi^{ij}(v-v')\big[G(v')\partial_{v_j}F(v)-F(v)\partial_{v_j}G(v')\big]\,dv'.
	\end{align*}
	The non-negative definite matrix-valued function $\phi=[\phi^{ij}(v)]_{1\leq i,j\leq 3}$ takes the form of 
	\begin{align*}
		\phi^{ij}(v) = \Big\{\delta_{ij}-\frac{v_iv_j}{|v|^2}\Big\}|v|^{\gamma+2},
	\end{align*}
where $\delta_{ij}$ is the Kronecker delta and $\gamma\ge -3$ is the interaction potential between particles. It is convenient to call it {\em hard potential} when $\gamma\ge -2$ and {\em soft potential} when $-3\le\gamma<-2$. The case $\gamma=-3$ corresponds to the physically realistic Coulomb interactions; cf. \cite{Guo2002a}. 
	
\subsubsection{Non-cutoff Boltzmann collision operator.} For Boltzmann collision operator without angular cutoff, $Q$ is defined by 	
	\begin{align*}
		Q(G,F) = \int_{\R^3}\int_{\mathbb{S}^{2}} B(v-v_*,\sigma)\big[G(v'_*)F(v')-G(v_*)F(v)\big]\,d\sigma dv_*.
	\end{align*}  
	In this expression $v,v_*$ and $v',v'_*$ are velocity pairs given in terms of the $\sigma$-representation by 
	\begin{align*}
		v'=\frac{v+v_*}{2}+\frac{|v-v_*|}{2}\sigma,\quad v'_*=\frac{v+v_*}{2}-\frac{|v-v_*|}{2}\sigma,\quad \sigma\in\mathbb{S}^2,
	\end{align*}
	that satisfy that 
	$v+v_*=v'+v'_*$ and 
	$|v|^2+|v_*|^2=|v'|^2+|v'_*|^2$.
	The Boltzmann collision kernel $B(v-v_*,\sigma)$ depends only on $|v-v_*|$ and the deviation angle $\theta$ through $\cos\theta=\frac{v-v_*}{|v-v_*|}\cdot\sigma$. Without loss of generality we can assume $B(v-v_*,\sigma)$ is supported on $0\le\theta\le\pi/2$, since one can reduce the situation with {\it symmetrization}: $\overline{B}(v-v_*,\sigma)={B}(v-v_*,\sigma)+{B}(v-v_*,-\sigma)$. Moreover, we assume 
	\begin{equation*}
		B(v-v_*,\sigma) = C_B|v-v_*|^\gamma b(\cos\theta),
	\end{equation*}
	for some $C_B>0$. $|v-v_*|^\gamma$ is called the kinetic part and $b(\cos\theta)$ is called the angular part. For non-cutoff Boltzmann case, we assume that there exist $C_b>0$ and $0<s<1$ such that  
	\begin{align*}
		\frac{1}{C_b\theta^{1+2s}}\le \sin\theta b(\cos\theta)\le \frac{C_b}{\theta^{1+2s}}, \quad\forall\,\theta\in (0,\frac{\pi}{2}].
	\end{align*} 
It is convenient call it {\em hard potential} when $\gamma+2s\ge 0$ and {\em soft potential} when $-3<\gamma+2s<0$. 

Throughout the paper, we assume $\gamma\ge -3$ for Landau case and $\gamma>\max\{-3,-\frac{3}{2}-2s\}$ for Boltzmann case. 
	
\subsubsection{Bounded Domain}

In this paper, we consider the bounded domain $\Omega\subset \R^3$ that is the union of finitely many cubes: 
\begin{align*}
	\Omega=\cup_{i=1}^N\Omega_i,
\end{align*} for some $N\ge 1$, where $\Omega_i = (a_{i,1},b_{i,1})\times(a_{i,2},b_{i,2})\times(a_{i,3},b_{i,3})$ with $a_{i,j}\in\R$. Note that $\Omega$ could be non-convex. Then
$\partial\Omega$ is divided into three kinds of boundary: $\pa\Omega=\cup^3_{i=1}\Gamma_i$, where $\Gamma_i$ is orthogonal to axis $x_i$ and is the union of finitely many connected sets. We also assume that $\Gamma_i$ is of {\em non-zero} spherical measure.  
Since the boundary of $\Gamma_i$'s is of {\em zero} spherical measure, we don't distinguish $\Gamma_i$ and the interior of $\Gamma_i$. 

The unit normal outward vector $n(x)$ exists on $\partial\Omega$ almost everywhere with respect to spherical measure. On the interior of $\Gamma_i$$(i=1,2,3)$, we have $
n(x) = e_i$ or $-e_i$, where $e_i$ is the unit vector with $i$th-component being $1$ and the other components being $0$.  We will denote vectors $\tau_1(x), \tau_2(x)$ on boundary $\partial\Omega$ such that $(n(x),\tau_1(x), \tau_2(x))$ form a unit orthonormal basis for $\R^3$. 
In this case, 
	\begin{align*}
		\int_{\Omega}\partial_{x_i}g\,dx = \int_{\partial\Omega}gn_i\,dS(x) = \int_{\Gamma_i}gn_i\,dS(x). 
	\end{align*}
The boundary of the phase space is 
\begin{align*}
	\gamma:=\{(x,v)\in\partial\Omega\times\R^3\}.
\end{align*}
With $n=n(x)$ being the outward normal direction at $x\in\partial\Omega$, we decompose $\gamma$ as 
\begin{align*}
	\gamma_- &= \{(x,v)\in\partial\Omega\times\R^3 : n(x)\cdot v<0\},\quad\text{(the incoming set),}\\
	\gamma_+ &= \{(x,v)\in\partial\Omega\times\R^3 : n(x)\cdot v>0\},\quad\text{(the outgoing set),}\\
	\gamma_0 &= \{(x,v)\in\partial\Omega\times\R^3 : n(x)\cdot v=0\},\quad\text{(the grazing set).}
\end{align*}
We consider, in this paper, the 
	{\em Specular reflection boundary condition:}
\begin{equation*}
	F(t,x,v) = F(t,x,R_xv), \text{ on } (x,v)\in \gamma_-, \text{ where }R_xv:= v-2n(x)(n(x)\cdot v).
\end{equation*}

There have been many contribution in the study of Boltzmann and Landau boundary value problems: \cite{Cercignani1992,Hamdache1992, Mischler2000,Yang2005, Liu2006,Guo2009, Esposito2013, Guo2016,Kim2017,Cao2019, Guo2020 }. 
For global stability in the perturbation framework, since the fundamental work by Guo \cite{Guo2009} on $L^2-L^\infty$ method, there are plenty of results developed for Boltzmann equation and Landau equation. For instance, Guo-Kim-Tonon-Trescases \cite{Guo2016} gave regularity of cutoff Boltzmann equation with several physical boundary conditions. Esposito-Guo-Kim-Marra \cite{Esposito2013} constructed a non-equilibrium stationary solution. Kim-Lee \cite{Kim2017} studied cutoff Boltzmann equation with specular boundary condition with external potential. Liu-Yang \cite{Liu2016} extended the result in \cite{Guo2009} to cutoff soft potential case. Cao-Kim-Lee \cite{Cao2019} proved the global existence for Vlasov-Poisson-Boltzmann with diffuse boundary condition. Guo-Hwang-Jang-Ouyang \cite{Guo2020} gave the global stability of Landau equation with specular reflection boundary. Duan-Liu-Sakamoto-Strain \cite{Duan2020} proved the global existence for Landau and non-cutoff Boltzmann equation in finite channel. 

Despite extensive developments in the study of Landau equation and Boltzmann equation, many basic boundary problems such as the global existence and uniqueness of non-cutoff Boltzmann equation in bounded domain have remain open. 

\subsubsection{Reformulation}
We write 
\begin{align*}
	F(t,x,v) = \mu(v) + \mu^{1/2}(v)f(t,x,v), 
\end{align*}
where $\mu$ is the global Maxwellian:
\begin{align*}
	\mu = (2\pi )^{-3/2}e^{-\frac{|v|^2}{2}}.
\end{align*}
Then the function $f$ satisfies
\begin{equation}
	\label{1}
	\partial_tf + v\cdot\na_xf = Lf +\Gamma(f,f)\quad f(0,x,v) = f_0(x,v). 
\end{equation}
where the linearized collision operator $L$ and nonlinear collision operator $\Gamma$ are given by 
\begin{align*}
	Lf = \mu^{-1/2}Q(\mu,\mu^{1/2}f)+\mu^{-1/2}Q(\mu^{1/2}f,\mu),
\end{align*}
and 
\begin{align*}
	\Gamma(f,f)=\mu^{-1/2}Q(\mu^{1/2}f,\mu^{1/2}f),
\end{align*}respectively. The specular boundary condition is given by 
\begin{equation}
	\label{specular}
	f(t,x,v) = f(t,x,R_xv), \text{ on } (x,v)\in \gamma_-, \text{ where }R_xv:= v-2n(x)(n(x)\cdot v).
\end{equation}
The kernel of $L$ is the span of $\{\mu^{1/2},v_i\mu^{1/2}$$(1\le i\le 3),|v|^2\mu^{1/2}\}$. Then we denote $\P$ to be the projection onto $\ker L$:
\begin{align}\label{abc}
	\P f = \big(a+b\cdot v+\frac{1}{2}c(|v|^2-3)\big)\mu^{1/2}(v),
\end{align}	
where 
\begin{align*}
	a = (f,\mu^{1/2})_{L^2_v},\quad b=(f,v\mu^{1/2})_{L^2_v},\quad c=\frac{1}{6}(f,(|v|^2-3)\mu^{1/2})_{L^2_v}.
\end{align*}



%

\subsection{Main Result}

Before presenting the main results, we specify some notations to be used through the paper.
Let $\<v\>=\sqrt{1+|v|^2}$ and 
$\partial^\alpha = \partial^{\alpha_1}_{x_1}\partial^{\alpha_2}_{x_2}\partial^{\alpha_3}_{x_3}$,
where $\alpha=(\alpha_1,\alpha_2,\alpha_3)$ is the multi-index. If each component of $\alpha_1$ is not greater than that of $\alpha$'s, we denote by $\alpha_1\le\alpha$. We will write $C>0$(large) and $\lambda>0$(small) to be generic constants, which may change from line to line. $\I$ is the identity mapping. $\1_{S}$ is the indicator function on a set $S$. Denote the $L^2_v$ and $L^2_{x,v}$, respectively, as 
\begin{align*}
	|f|^2_{L^2_v} = \int_{\R^3}|f|^2\,dv,\quad \|f\|_{L^2_{x,v}}^2 = \int_{\Omega}|f|^2_{L^2_v}\,dx.
\end{align*}
Denote $L^2_{B}$ to be the $L^2_v$ space inside a ball $B$. 
Define the weight function 
\begin{equation}
	\label{w2}
	w = w(v) = \exp\Big(\frac{q\<v\>^\vt}{4}\Big). 
\end{equation}
We assume the following condition on $q$ and $\vt$:
\begin{equation}\label{qvt}
	\left\{
	\begin{aligned}
		&q=0,\quad \text{ for hard potential in both Boltzmann and Landau case, }\\
		&\vt=1,\quad\text{ for soft potential in Boltzmann case,}\\
		&\vt\in[1,2] \text{ and retrict $q<1$ when $\vt=0$, for soft potential in Landau case.}
	\end{aligned}
	\right.
\end{equation}
For Landau equation, we denote 
\begin{align*}
	\sigma^{ij}(v)=\int_{\R^3}\phi^{ij}(v-v')\mu(v')\,dv',\quad \sigma^i(v)=\sum_{j=1}^3\int_{\R^3}\phi^{ij}(v-v')\frac{v'_j}{2}\mu(v')\,dv'.
\end{align*}
and the dissipation norm as 
\begin{align*}
	|f|^2_{L^2_{D,w}}= \int_{\R^3}w^2\Big(\sigma^{ij}\partial_{v_i}\partial_{v_j}+\sigma^{ij}\frac{v_i}{2}\frac{v_j}{2}|f|^2\Big)\,dv,\quad
	\|f\|^2_{L^2_xL^2_D} = \int_{\Omega}|f|^2_{L^2_D}\,dx.
\end{align*}
We also denote $|f|_{L^2_D} := |f|_{L^2_{D,1}}$. 
For non-cutoff Boltzmann equation, as in \cite{Gressman2011}, we denote
\begin{equation*}
	|f|^2_{L^2_D}:=|\<v\>^{\frac{\gamma+2s}{2}}f|^2_{L^2_v}+ \int_{\R^3}dv\,\<v\>^{\gamma+2s+1}\int_{\R^3}dv'\,\frac{(f'-f)^2}{d(v,v')^{3+2s}}\1_{d(v,v')\le 1},
\end{equation*}
and 
\begin{equation*}
	 |f|^2_{L^2_{D,w}}=|wf|^2_{L^2_D},\quad \|f\|^2_{L^2_xL^2_{D,w}} = \int_{\Omega}|f|_{L^2_{D,w}}^2\,dx.
\end{equation*}
The fractional differentiation effects are measured using the anisotropic metric on the {\it lifted} paraboloid
$d(v,v'):=\{|v-v'|^2+\frac{1}{4}(|v|^2-|v'|^2)^2\}^{1/2}$.

For the above norms, we denote 
\begin{align*}
	\|f\|^2_{H^2_xL^2_v} := \sum_{|\alpha|\le 2}\|\pa^\alpha f\|_{L^2_xL^2_v},\qquad
	\|f\|^2_{H^2_xL^2_D} := \sum_{|\alpha|\le 2}\|\pa^\alpha f\|_{L^2_xL^2_D}.
\end{align*}
To capture the energy estimate of Landau and non-cutoff Boltzmann equation, we introduce the ``instant energy functional" $\E(t)$ and the ``dissipation energy functional" $\D(t)$:
\begin{equation}\label{defe}
	\E(t) \approx \|f(t)\|^2_{H^2_xL^2_v}, 
\end{equation}	
\begin{equation}\label{defd}
	\D(t) := \| f(t)\|_{H^2_xL^2_D}^2.
\end{equation}
Also, we denote the weighted energy functional by 
\begin{equation}\label{defew}
	\E_w(t) \approx \|wf(t)\|^2_{H^2_xL^2_v},
\end{equation} and
\begin{equation}\label{defdw}
\D_w(t) := \| f(t)\|_{H^2_xL^2_{D,w}}^2.
\end{equation}
Note that for hard potential, we have $w=1$ and hence $\E(t)\approx\E_w(t)$ and $\D(t)=\D_w(t)$. 
Moreover, we can write the conservation laws on mass and energy as 
\begin{equation}
	\label{conservatrion}
	\int_{\T^3}\int_{\R^3}\begin{pmatrix}
		1    \\
		|v|^2  
	\end{pmatrix}\sqrt{\mu}f_0(x,v)\,dvdx=0.
\end{equation}
For large-time behavior, we define index 
\begin{equation}\label{p}
	p = \left\{
	\begin{aligned}
		&1,\qquad\qquad\qquad \text{ for hard potential in both Boltzmann and Landau case, }\\
		&\frac{1}{-\gamma-2s+1},\quad\text{ for soft potential in Boltzmann case,}\\
		&\frac{\vt}{-\gamma-2+\vt},\qquad \text{for soft potential in Landau case.}
	\end{aligned}
	\right.
\end{equation}

Next we present the main result of this article. 
\begin{Thm}\label{Main}
	Assume $\gamma>\max\{-3,-2s-\frac{3}{2}\}$ for Boltzmann case and $\gamma\ge -3$ for Landau case. Then there exists $\ve_0>0$ such that if $F_0(x,v)=\mu+\mu^{1/2}f_0(x,v)\ge 0$ satisfying \eqref{conservatrion} and
	\begin{align}
		\label{small}
		\sum_{|\alpha|\le 2}\|\pa^\alpha f_0\|_{L^2_xL^2_v}\le \ve_0, 
	\end{align}
	then there exists a unique global mild solution $f=f(t,x,v)$ to the problem \eqref{1} and \eqref{specular} satisfying that $F(t,x,v)=\mu+\mu^{1/2}f(t,x,v)\ge 0$ and for $T>0$, 
	\begin{equation*}
		\sup_{0\le t\le T}\E(t)+\int^T_0\D(t)\,dt\le \sum_{|\alpha|\le 2}\|\pa^\alpha f_0\|^2_{L^2_xL^2_v},
	\end{equation*}
	where $\E(t)$, $\D(t)$ are defined in \eqref{defe}, \eqref{defd} respectively. 
			For large-time behavior, we assume additionally 
			\begin{align}
				\label{small2}
				\sum_{|\alpha|\le 2}\|w\pa^\alpha f_0\|_{L^2_xL^2_v}\le \ve_0,
			\end{align}
		where $w$ is defined by \eqref{w2}. 
		Let $p\in(0,1]$ be given in \eqref{p}, then there exists $\delta>0$ such that the solution enjoys time decay estimate
			\begin{align}\label{timedecay}
				\|{wf}(t)\|_{H^2_xL^2_v} \lesssim e^{-\delta t^p}\|{wf_0 }\|_{H^2_xL^2_v}.
			\end{align}
\end{Thm}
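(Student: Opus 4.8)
The plan is to establish the result through the nonlinear energy method of Guo, adapted so that all boundary contributions are organized around the flat, axis-aligned faces of $\Omega$. The two structural inputs are the coercivity of the linearized operator, $(-Lf,f)_{L^2_v}\gtrsim|\{\I-\P\}f|^2_{L^2_D}$, and the macro--micro splitting $f=\P f+\{\I-\P\}f$ with $\P f$ as in \eqref{abc}. The goal is a closed differential inequality of the form $\frac{d}{dt}\E(t)+\lambda\D(t)\le C\sqrt{\E(t)}\,\D(t)$; under the smallness \eqref{small} the right-hand side is absorbed, giving $\sup_{t}\E+\int_0^T\D\lesssim\E(0)$. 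Local existence and uniqueness then follow from a standard linearize-and-iterate scheme built on the same estimates, a continuation argument upgrades this to the global bound, and $F=\mu+\mu^{1/2}f\ge0$ is obtained by realizing $f$ as the limit of a positivity-preserving approximation.

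The first step is the zeroth-order estimate: pairing \eqref{1} with $f$ over $\Omega\times\R^3$, the transport term yields the boundary flux $\frac{1}{2}\int_\gamma(n\cdot v)f^2\,dS\,dv$, which vanishes under the specular condition \eqref{specular} because $v\mapsto R_xv$ preserves $dv$ and $f^2$ while reversing the sign of $n\cdot v$. The linear term supplies the coercive dissipation of $\{\I-\P\}f$, and the nonlinear term $(\Gamma(f,f),f)_{L^2_v}$ is bounded by $\sqrt{\E}\,\D$ via the trilinear estimates for $L$ and $\Gamma$ recalled from \cite{Gressman2011} and \cite{Guo2002a}. For high-order estimates one applies $\partial^\alpha$, $|\alpha|\le2$, which commutes with $v\cdot\na_x$ since the coefficients are constant in $x$; derivatives that are tangential to a face $\Gamma_i$ commute with \eqref{specular}, so $\partial^\alpha f$ again inherits the specular symmetry and the corresponding flux vanishes for the same reason.

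The main obstacle is the normal derivative: $\partial^\alpha f$ with $\alpha_i>0$ does not satisfy \eqref{specular} on $\Gamma_i$, so the flux $\frac{1}{2}\int_{\Gamma_i}\int_{\R^3}(n\cdot v)(\partial^\alpha f)^2\,dv\,dS$ is not obviously controlled. This is exactly where the cube geometry is decisive. Since $\Gamma_i$ is orthogonal to the axis $x_i$, the outward normal is $\pm e_i$ and \eqref{specular} reduces to the velocity reflection $v_i\mapsto-v_i$; hence $f$ is \emph{even} in $v_i$ on $\Gamma_i$. Using that $L$ and $\Gamma$ commute with the reflection $v_i\mapsto-v_i$ (as $\mu$ is even and the collision operators are rotationally invariant), one checks inductively on $|\alpha|$ that $\partial^\alpha f$ has parity $(-1)^{\alpha_i}$ in $v_i$ on $\Gamma_i$: a tangential derivative preserves parity, while a normal derivative $\partial_{x_i}$ flips it, because solving \eqref{1} for $v_i\partial_{x_i}(\cdot)$ expresses it through $\partial_t$, tangential derivatives, and the parity-preserving operators $L,\Gamma$. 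Consequently $(\partial^\alpha f)^2$ is even in $v_i$ on $\Gamma_i$, so $(n\cdot v)(\partial^\alpha f)^2=\pm v_i(\partial^\alpha f)^2$ is odd in $v_i$ and its $v$-integral vanishes. This argument is purely local to each face, so the possible non-convexity of $\Omega$ is irrelevant, and it is precisely the statement that ``a normal derivative is a derivative along an axis.''

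With every boundary flux removed, the macroscopic part is controlled by taking velocity moments of \eqref{1} to obtain a fluid-type system for $(a,b,c)$; its dissipation is recovered from $\{\I-\P\}f$ by testing against suitable velocity weights, the conservation law \eqref{conservatrion} removes the zero modes, and the specular condition furnishes the boundary data (in particular $b\cdot n=0$ on $\partial\Omega$) needed for the Poincaré/elliptic estimates. Summing the microscopic, macroscopic, and high-order estimates closes the energy inequality; for hard potentials $w=1$ and the effective dissipation satisfies $\D\gtrsim\E$, which yields the stated bound and exponential decay. For the sub-exponential rate \eqref{timedecay} in the soft-potential regime I would repeat the scheme with the weight $w$ of \eqref{w2}, producing $\frac{d}{dt}\E_w+\lambda\D_w\lesssim\sqrt{\E}\,\D_w$; since now $\D_w$ no longer dominates $\E_w$ because of the velocity-weight loss $\langle v\rangle^{\gamma+2s}$ (Boltzmann) or the analogous Landau loss, I would split $\{\langle v\rangle\le R\}$ from its complement, use the interpolation $\E\lesssim\D^{1-1/k}\E_w^{1/k}$ together with the gain $e^{qR^\vt/4}$ of $w$ on the complement, and optimize $R=R(t)$. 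Solving the resulting differential inequality gives the stretched-exponential rate $e^{-\delta t^p}$ with $p$ as in \eqref{p}, completing the proof.
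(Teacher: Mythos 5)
Your proposal is correct and follows essentially the same route as the paper: the parity argument for $\partial^\alpha f$ on the faces $\Gamma_i$ (obtained by solving the equation for $v_i\partial_{x_i}f$ and using that $L$ and $\Gamma$ commute with the reflection) is exactly the paper's Lemma \ref{Lem24}, and the macro--micro machinery with dual elliptic problems for $(a,b,c)$ matches Theorem \ref{Thm31}. The only cosmetic difference is in the soft-potential decay, where you optimize a velocity cutoff $R(t)$ via interpolation while the paper works with $h=e^{\delta t^p}f$ and splits the velocity space at $\langle v\rangle = t^{p'}$ --- two equivalent formulations of the Strain--Guo stretched-exponential argument.
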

We will make a few comments on Theorem \ref{Main}. Our main target throughout the paper is to study the global well-posedness for Landau equation and non-cutoff equation in union of cubes with physical boundary condition; namely, the specular reflection boundary condition. In union of finitely many cubes, one can define the normal derivatives as well as derivatives along axis on boundary by using the equation. They satisfy the specular reflection condition with sign $\pm$; see Lemma \ref{Lem24}. Then the boundary effect arising from $v\cdot\na_xf$ vanishes with this nice property.

As illustrated in \cite{Guo2016}, there should be singularity at the boundary if the domain is a ball and the singularity may propagate for in-flow injection, diffuse reflection and bounce-back reflection boundary conditions \cite{Kim2011}. 
Theorem \ref{Main} implies that Landau and Boltzmann equation have different behavior in cubes and in balls. Also, specular reflection boundary problem is different from other kinds of boundary conditions. In our case, we are able to construct $H^2_xL^2_v$ solutions.

	The reason for choosing boundary $\Omega$ being union of cubes is the following. Firstly, normal derivatives in cubes is also derivatives along axis. It follows that we can estimate the high-order derivatives on boundary as normal derivative. 
In order to deal with normal derivatives on boundary, we will apply 
\begin{align*}
	v\cdot\nabla_xf = v\cdot n(x)\partial_{n}f + v\cdot \tau_1(x)\partial_{\tau_1}f +v\cdot \tau_2(x)\partial_{\tau_2}f. 
\end{align*}
Together with equation \eqref{1}, we are able to define the boundary value for $\pa_nf$:
\begin{align*}
	\partial_nf = \frac{1}{v\cdot n}\Big(-v\cdot \tau_1(x)\partial_{\tau_1}f -v\cdot \tau_2(x)\partial_{\tau_2}f -\partial_tf +Lf +\Gamma(f,f)\Big).
\end{align*} 
One can deduce that $\pa_nf(x,R_xv)=-\pa_nf(x,R_xv)$ on $v\cdot n(x)\neq 0$. 
This is also called the compatible condition. 
When deriving the energy estimates with derivative $\pa_{x_i}$, it's necessary to note that $\pa_{x_i}$ is also the normal derivative on $\Gamma_i$. That is, $\pa_{x_i}f=\pa_nf$ or $\pa_{x_i}f=-\pa_nf$ on boundary $\Gamma_i$ and we can derive the specular reflection boundary condition for high-order derivative, which is frequently used in this paper; see Lemma \ref{Lem24}. 	

Next we give a short illustration for the vanishing boundary term. 
Taking inner product of $v\cdot\na_x\pa^\alpha f$ with test function $\Phi$ over $\Omega\times\R^3$, the boundary term occurs:
\begin{multline}\label{15}
	\int_{\partial\Omega}\int_{\R^3}v\cdot n(x)\pa^\alpha f(x,v)\Phi(x,v)\,dvdS(x)\\
	= \int_{\partial\Omega}\int_{\R^3}R_xv\cdot n(x)\pa^\alpha f(x,R_xv)\Phi(x,R_xv)\,dvdS(x).
\end{multline}
Although we have \eqref{specular}, $\pa^\alpha f(x,R_xv)$ has different properties when taking normal derivative and tangent derivatives; see \eqref{115c} and \eqref{115}. However, it's hard to evaluate $\partial^\alpha f(x,R_xv)$ on boundary for general bounded domain., 
Therefore, we choose $\Omega$ to be union of finitely many cubes to ensure that $\partial_{x_i}\in\{\pm\partial_n,\pm\pa_{\tau_1},\pm\pa_{\tau_2}\}$. Then one can apply \eqref{115c} and \eqref{115} to make sure \eqref{15} vanish with nicely chosen $\Phi$. 
%

The rest of the paper is organized as follows. In Section \ref{Sec2}, we give some basic estimates for linearized collision operator and nonlinear collision operator. In Section \ref{Sec3}, the macroscopic estimates for Landau and Boltzmann equation was derived. In Section \ref{Sec4}, we are able to prove the global existence with the {\em a priori} estimates and the local existence. In Section \ref{Sec5}, we give the proof of local existence for completeness. The Appendix \ref{Append} is devoted to Carleman representation for Boltzmann equation.

\section{Preliminary}\label{Sec2}
In this section, we provide several Lemmas on collision operator $L$ and $\Gamma(f,g)$. The first Lemma is concerned with weighted coercive estimate on $L$. The second Lemma is devoted to the trilinear estimate on $\Gamma(f,g)$ with velocity weight. 

\begin{Lem}
\label{L2}Assume $\gamma\ge -3$ for Landau case and $\gamma>\max\{-3,-2s-3/2\}$ for Boltzmann case. Then there exists decomposition for linearized collision operator
	\begin{equation*}
		L = -A+K,
	\end{equation*}such that 
\begin{align}\label{estiAK}
	(Af,f)_{L^2_v} \ge c_0|f|_{L^2_D}^2,
\end{align}	
\begin{align}\label{estiAK1}
(w^2Af,f)_{L^2_v} \ge c_0|f|_{L^2_{D,w}}^2-C|f|_{L^2_v},
\end{align}	
and $K$ is a bounded operator on $L^2_v$. Moreover, 
\begin{equation}
	\label{L}
	(-Lf,f)_{L^2_v}\ge c_0|\{\I-\P\}f|_{L^2_D}^2,
\end{equation}
and 
\begin{equation}\label{LL}
	(-w^2Lf,f)_{L^2_v}\ge c_0|f|_{L^2_{D,w}}^2- C|f|^2_{L^2_{B_C}},
\end{equation}
for some generic constant $c_0,C>0$.
\end{Lem}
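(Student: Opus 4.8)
The plan is to handle the two models in parallel, choosing $A$ to be the coercive (diffusive, resp.\ singular) part of $-L$ and $K$ the remaining lower-order integral part, and to treat the non-cutoff Boltzmann case with the Carleman representation of the Appendix while treating Landau through the explicit second-order structure. For Landau I would take $A$ to be the operator whose quadratic form is, after integration by parts, $(Af,f)_{L^2_v}=\int_{\R^3}\big(\sigma^{ij}\pa_if\,\pa_jf+\sigma^{ij}\tfrac{v_i}{2}\tfrac{v_j}{2}|f|^2\big)\,dv$ (summation in $i,j$), which is exactly $|f|^2_{L^2_D}$ by definition, so \eqref{estiAK} is immediate once one checks that the lower-order remainder in $L$ is folded into $K$; the non-negativity and ellipticity of $[\sigma^{ij}]$, with the standard lower bound $\sigma^{ij}\xi_i\xi_j\gtrsim\langle v\rangle^{\ga}|\xi|^2$ on directions transversal to $v$, is what makes the form control $|f|^2_{L^2_D}$. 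For Boltzmann, \eqref{estiAK} is the non-cutoff sub-elliptic coercivity of the loss part in the Gressman--Strain norm \cite{Gressman2011}, proved via the Carleman change of variables.

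\textbf{Spectral gap and boundedness of $K$.} Next I would establish that $K$ is bounded on $L^2_v$: for Landau, $K$ is an integral operator whose kernel is built from convolutions of $\mu$ and its derivatives with $\phi$, hence smooth with Gaussian-type decay and Hilbert--Schmidt; for Boltzmann, $K$ is the regularizing gain-type remainder, bounded by the cancellation lemma together with kernel estimates. The spectral-gap bound \eqref{L} then follows from $-L=A-K$: since $\ker L$ is the macroscopic span projected by $\P$ in \eqref{abc} and $K$ is relatively compact with respect to $A$, a weak-compactness/contradiction argument restricted to $\{\I-\P\}f$ upgrades the coercivity of $A$ to $(-Lf,f)_{L^2_v}\gtrsim|\{\I-\P\}f|^2_{L^2_D}$.

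\textbf{Weighted estimates.} The core of the proof is \eqref{estiAK1} and \eqref{LL}. I would compute $(w^2Af,f)_{L^2_v}$ by distributing one factor $w$ onto each argument, which for Landau produces the main term $|f|^2_{L^2_{D,w}}$ together with commutator contributions of schematic form $\int\sigma^{ij}\tfrac{\pa_iw}{w}\tfrac{\pa_jw}{w}|wf|^2$ and cross terms $\int\sigma^{ij}\tfrac{\pa_iw}{w}\,(wf)\,\pa_j(wf)$. Since $w=\exp(q\langle v\rangle^{\vt}/4)$ in \eqref{w2} gives $\pa_iw/w\sim q\langle v\rangle^{\vt-2}v_i$, these terms carry velocity weights that, under the calibration of $(q,\vt)$ against $\ga$ in \eqref{qvt}, are strictly subordinate to the dissipation; restricting to large $|v|$ they are absorbed into a fraction of $c_0|f|^2_{L^2_{D,w}}$, leaving a bounded-velocity remainder controlled by $C|f|_{L^2_v}$, which gives \eqref{estiAK1}. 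For Boltzmann the same scheme applies but the conjugation must be carried through the non-local operator in the fractional norm; here I would estimate $|w(v)-w(v')|$ by a Taylor/mean-value bound along the lifted paraboloid and compare it against the singular weight $d(v,v')^{-3-2s}$, which is where the restriction $\vt=1$ for Boltzmann soft potentials is consumed. Finally \eqref{LL} follows by adding the weighted bound on $(w^2Kf,f)_{L^2_v}$, split into a large-velocity piece absorbed into the dissipation via the decay of the kernel and a compact bounded-velocity piece producing the localized loss $C|f|^2_{L^2_{B_C}}$.

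\textbf{Main obstacle.} I expect the decisive difficulty to be the weighted coercivity for non-cutoff Boltzmann: commuting the exponential weight $w$ through the anisotropic fractional operator while preserving the gain/loss cancellation, and showing that the commutator errors are genuinely \emph{lower order} (hence absorbable, not merely bounded) throughout the admissible range of $(q,\vt,\ga)$ in \eqref{qvt}, requires delicate control of $w(v)-w(v')$ against $d(v,v')$ inside the singular integral. The Landau weighted estimates are comparatively transparent because $A$ is a genuine differential operator and the commutators are explicit.
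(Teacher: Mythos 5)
Your overall strategy coincides with the paper's (which itself mostly delegates to the literature): split $-L=A-K$ with $A$ the coercive part, prove \eqref{estiAK} from the quadratic-form structure for Landau and the Gressman--Strain coercivity for Boltzmann, get \eqref{L} by a compactness argument on $\{\I-\P\}f$, and obtain the weighted bounds by commuting $w$ through $A$. The only genuine difference of route is at \eqref{estiAK1}/\eqref{LL}: the paper quotes \eqref{LL} from \cite{Strain2007,Duan2013a} and then reads off \eqref{estiAK1} as $(-w^2Lf,f)+(w^2Kf,f)$, whereas you propose to prove the weighted coercivity of $A$ directly and then add back $K$; both orders are fine, yours being more self-contained at the cost of redoing the weight--commutator analysis.

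There is, however, one concrete gap in your Landau decomposition. If $A$ is defined so that $(Af,f)_{L^2_v}$ equals \emph{exactly} $\int(\sigma^{ij}\pa_{v_i}f\,\pa_{v_j}f+\sigma^{ij}\tfrac{v_i}{2}\tfrac{v_j}{2}|f|^2)\,dv$, then $K=L+A$ necessarily contains the multiplication operator $\pa_{v_i}\sigma^i$, which by \eqref{65a} behaves like $\<v\>^{\ga+1}$ and is therefore \emph{unbounded} on $L^2_v$ whenever $\ga>-1$ (Maxwellian molecules, hard spheres, all hard potentials with $\ga>-1$). So "the lower-order remainder is folded into $K$" fails as stated, and with it the Hilbert--Schmidt/boundedness claim for $K$. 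The fix — which is exactly what the paper's decomposition \eqref{AK1} implements — is to keep $\pa_{v_i}\sigma^i\1_{|v|>R}f$ (and the near-singular convolution piece $A_1$) inside $A$, absorb them into the dissipation using $\<v\>^{\ga+1}\1_{|v|>R}\le R^{-1}\<v\>^{\ga+2}$ for $R$ large and $\ve$ small, and put only the compactly cut-off remainder into $K$; the price is that \eqref{estiAK} becomes an inequality with constant $c_0<1$ rather than an identity, so it is not "immediate". The same remark applies to the Boltzmann split \eqref{AK}, where $\nu_K$ is unbounded for $\ga>0$ and must likewise be truncated, with the tail $\nu_K\1_{|v|>R}$ absorbed via $|\nu_K\1_{|v|>R}|\lesssim\<R\>^{-2s}\<v\>^{\ga+2s}$. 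Your large-velocity/small-velocity absorption in the weighted step shows you have the right mechanism; it just needs to be invoked already at the level of the unweighted decomposition.
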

\begin{proof} 
	The proof of \eqref{L} can be found in \cite[Lemma 5]{Guo2002a} for Landau case and \cite[(2.13)]{Gressman2011} for Boltzmann case. 
	The proof of \eqref{LL} can be found in \cite[Lemma 9]{Strain2007} for Landau case and \cite[Lemma 2.7]{Duan2013a} for Boltzmann case. 
	Note from \eqref{qvt} that when $\gamma\ge -2$ in Landau case and $\gamma+2s\ge 0$ in Boltzmann case, we have $q=0$ and hence, $w=1$. That is, it's not necessary to include any velocity weight in the {\em hard} potential cases. 
	\eqref{estiAK1} follows from the \eqref{LL} and the boundedness of $K$. Thus, we only prove \eqref{estiAK} and the boundedness of $K$ in the following. 
	
	For the proof of \eqref{estiAK}, we proceed in two cases. 
	
\medskip\noindent{\bf Case I: Landau equation.}
For Landau equation, we will apply the decomposition $L=A+K$ in \cite[Section 4.2]{Yang2016}. Let $\varepsilon>0$ small and choose a smooth cutoff function $\chi(|v|)\in[0,1]$ such that 
$
	\chi(|v|)=1\text{ if } |v|<\varepsilon;\  \chi(|v|)=0 \text{ if } |v|>2\varepsilon.
$
Then we can split $L=-A+K$ with   
\begin{equation}\label{AK1}\begin{aligned}
		-Af &= \partial_{v_i}(\sigma^{ij}\partial_{v_j}f) - \sigma^{ij}\frac{v_i}{2}\frac{v_j}{2}f
		+\partial_{v_i}\sigma^i\1_{|v|> R}f+A_1f\\
		&\qquad+(K_1-\1_{|v|\le R}K_1\1_{|v|\le R})f,\\
		Kf &= \partial_{v_i}\sigma^i\1_{|v|\le R}f + \1_{|v|\le R}K_1\1_{|v|\le R}f,
	\end{aligned}
\end{equation}
where $R>0$ is to be chosen large, $\varepsilon>0$ is to be chosen small, and $A_1$ and $K_1$ are respectively given by 
\begin{align*}
	A_1f &= -\mu^{-1/2}\partial_{v_i}\Big\{\mu\Big[\Big(\phi^{ij}\chi\Big)*\Big(\mu\partial_{v_j}\big[\mu^{-1/2}f\big]\Big)\Big]\Big\},\\
	K_1f &=  -\mu^{-1/2}\partial_{v_i}\Big\{\mu\Big[\Big(\phi^{ij}\big(1-\chi\big)\Big)*\Big(\mu\partial_{v_j}\big[\mu^{-1/2}f\big]\Big)\Big]\Big\},
\end{align*}
with the convolution taken with respect to the velocity variable $v$. 
Here and below repeated indices are implicitly summed over. 
From \cite[Lemma 3]{Guo2002a}, we know that  
\begin{align}\label{65a}
	|\partial_\beta\sigma^{ij}(v)|+|\partial_\beta\sigma^i(v)|\le C_\beta(1+|v|)^{\gamma+2-|\beta|}.
\end{align}
Then \cite[(4.33)]{Yang2016} shows that 
\begin{equation*}
	(A f,f)_{L^2_{v}}
	\ge c_0|f|^2_{L^2_{D}}.
\end{equation*}
Also, \cite[(4.32)]{Yang2016} and \eqref{65a} implies that $K$ is a bounded operator on $L^2_v$. 

\medskip\noindent{\bf Case II: Boltzmann equation.} 
We will use Pao's splitting as $\tilde{{\nu}}(v)={\nu}(v)+{\nu}_K(v)$; cf. \cite[p.568 eq. (65), (66)]{Pao1974} and \cite{Gressman2011}. Then $\nu(v)\approx\<v\>^{\gamma+2s}$ and $|{\nu}_K(v)|\lesssim \<v\>^\gamma$. We split $L=-A+K$ with 
\begin{equation}\label{AK}
	\begin{aligned}
		-Af &= \Gamma(\mu^{1/2},f)+{\nu}_K(v)f-\nu_K(v)\1_{|v|> R}f,\\
		Kf &= \Gamma(f,\mu^{1/2})-\nu_K(v)\1_{|v|\le R}f,
	\end{aligned}
\end{equation}
Then \cite[Lemma 2.4 and Lemma 2.5]{Gressman2011} show that 
\begin{align*}
	((A-\nu_K(v)\1_{|v|> R})f,f)_{L^2_v}&\approx |f|^2_{L^2_D}.
\end{align*}
Here, by using $|\<v\>^{\frac{\gamma+2s}{2}}(\cdot)|_{L^2_v}\lesssim |\cdot|_{L^2_D}$, we have 
\begin{align*}
	|(\nu_K(v)\1_{|v|> R}f,f)_{L^2_v}|\le C\<R\>^{-2s}|\<v\>^{\frac{\gamma+2s}{2}}f|_{L^2_v}^2\le C\<R\>^{-2s}|f|_{L^2_D}^2. 
\end{align*}
Then choosing $R>0$ large enough, we have 
\begin{equation*}
	(Af,f)_{L^2_v}\gtrsim |f|^2_{L^2_D}.
\end{equation*}
Using \cite[Lemma 2.1]{Global2019}, we know that $|\Gamma(f,\mu^{1/2})|_{L^2_v}\lesssim |\<v\>^{-C}f|_{L^2_v}$ for any $C>0$ and hence, 
\begin{align*}
	|Kf|_{L^2_v}\lesssim |\<v\>^{-C}f|_{L^2_v}. 
\end{align*}
This implies that $K$ is bounded on $L^2_v$ and completes the proof of Lemma \ref{L2}. 
\end{proof}

\begin{Lem}\label{gam}
Assume $\gamma\ge -3$ for Landau case and $\gamma>\max\{-3,-2s-3/2\}$ for Boltzmann case. Then
\begin{align}
	\label{gamma}
	(w^2\Gamma(f,g),h)_{L^2_v}\lesssim \Big(|wf|_{L^2_v}|g|_{L^2_{D,w}}+|f|_{L^2_{D,w}}|wg|_{L^2_v}\Big)|h|_{L^2_{D,w}}. 
\end{align}
Moreover, for any $|\alpha|\le 2$, we have 
\begin{align}\label{gammax}
	|(w^2{\partial^\alpha\Gamma(f,g)}, h)_{L^2_{x,v}}|\lesssim \Big(\|{wf}\|_{H^2_xL^2_v}\|{g}\|_{H^2_xL^2_{D,w}}+\|{f}\|_{H^2_xL^2_{D,w}}\|{wg}\|_{H^2_xL^2_{v}}\Big)\| {wh}\|_{L^2_xL^2_{D}}
\end{align}
\end{Lem}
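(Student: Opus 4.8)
The plan is to prove the purely velocity-variable estimate \eqref{gamma} first, and then deduce \eqref{gammax} from it by the Leibniz rule together with a Sobolev embedding. Because $\Gamma$ is bilinear and acts only on $v$, for $|\alpha|\le 2$ one has $\pa^\alpha\Gamma(f,g)=\sum_{\alpha_1\le\alpha}\binom{\alpha}{\alpha_1}\Gamma(\pa^{\alpha_1}f,\pa^{\alpha-\alpha_1}g)$, so the entire analytic difficulty lives in the $v$-only trilinear bound; this I would establish separately for the two collision operators. In both cases the strategy is to reduce \eqref{gamma} to the corresponding unweighted trilinear estimate—available from the literature—and then to show that conjugating $\Gamma$ by the weight $w$ produces commutator errors that are absorbed by the weighted dissipation norm $L^2_{D,w}$.

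For the Landau operator I would start from the divergence form $\Gamma(f,g)=\pa_{v_i}\big[\{\phi^{ij}*(\mu^{1/2}f)\}\pa_{v_j}g\big]+\text{l.o.t.}$, pair $w^2\Gamma(f,g)$ with $h$, and integrate by parts in $v$ to move the outer $\pa_{v_i}$ onto $w^2h$. This yields terms of the schematic shape $\int\{\phi^{ij}*(\mu^{1/2}f)\}\,\pa_{v_j}g\,\big(w^2\pa_{v_i}h+\pa_{v_i}(w^2)\,h\big)\,dv$. The convolution coefficient and its derivatives are bounded pointwise by $C\<v\>^{\gamma+2}|wf|_{L^2_v}$ using \eqref{65a} and the Gaussian decay of $\mu^{1/2}$, which absorbs any negative velocity weight carried by $f$; the factor $w^2\pa_{v_i}h$ then matches the gradient part of $|g|_{L^2_{D,w}}$ and $|h|_{L^2_{D,w}}$ directly. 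The delicate term is the weight derivative $\pa_{v_i}(w^2)=\tfrac{q\vt}{2}\<v\>^{\vt-2}v_i\,w^2$, carrying the polynomial factor $\<v\>^{\vt-1}$; I would control it by a Young split distributing this factor between the gradient weight $\<v\>^{\gamma+2}$ and the zeroth-order weight $\<v\>^{\gamma+4}$ of the dissipation norm, which is admissible precisely because $\vt\le\gamma+4$ under \eqref{qvt}. A final Cauchy--Schwarz against $|h|_{L^2_{D,w}}$ splits the bound into the two advertised products. The unweighted case $w=1$ is exactly the estimate of \cite{Strain2007,Yang2016}.

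For the non-cutoff Boltzmann operator I would use the Carleman representation from the Appendix together with the cancellation lemma, starting from the unweighted trilinear inequality of \cite{Gressman2011,Duan2013a}. Writing $(w^2\Gamma(f,g),h)_{L^2_v}=(\Gamma(f,g),w^2h)_{L^2_v}$ and expanding the weak form, the collisional difference $w^2(v')h(v')-w^2(v)h(v)$ appears; I would split it as $w(v')\big(w(v')h(v')-w(v)h(v)\big)+\big(w(v')-w(v)\big)w(v)h(v)$, so the only new ingredient is a bound on the weight difference $w(v')-w(v)$ in terms of the lifted-paraboloid metric $d(v,v')$. Since $w$ is radial and, for the soft Boltzmann case, $\vt=1$ in \eqref{qvt}, a mean-value estimate gives $|w(v')-w(v)|\lesssim d(v,v')\,(w(v')+w(v))$ on the support $d(v,v')\le 1$, and the extra factor $d(v,v')$ improves the singular kernel $d(v,v')^{-3-2s}$ enough to be absorbed into $|g|_{L^2_{D,w}}|h|_{L^2_{D,w}}$; the Gaussian tail hidden in $\Gamma$ lets the remaining factor be measured in $|wf|_{L^2_v}$.

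Finally, for \eqref{gammax} I would apply \eqref{gamma} pointwise in $x$ to each Leibniz term $\Gamma(\pa^{\alpha_1}f,\pa^{\alpha-\alpha_1}g)$, integrate in $x$, and use Hölder together with the three-dimensional Sobolev embedding $H^2(\Omega)\hookrightarrow L^\infty(\Omega)$ to place the lower-order factor in $L^\infty_x$ and the higher-order factor in $L^2_x$; summing over $\alpha_1\le\alpha$ reproduces the right-hand side of \eqref{gammax}. I expect the genuine obstacle to be the weight commutator, which is exactly what pins down the admissible range of $\vt$ in \eqref{qvt}: in the Landau case it forces the borderline constraint $\vt\le\gamma+4$ (sharp at $\gamma=-3$), while in the Boltzmann case the subtlety is uniform control of $w(v')-w(v)$ along the anisotropic metric. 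Everything else is a matter of matching weights and applying Cauchy--Schwarz.
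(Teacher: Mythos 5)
Your treatment of \eqref{gammax} follows the same route as the paper (Leibniz rule in $x$, then \eqref{gamma} pointwise in $x$, then Sobolev embedding), but as literally stated your H\"older split does not close for the balanced term. When $|\alpha|=2$ and $|\alpha_1|=1$, both factors $\pa^{\alpha_1}f$ and $\pa^{\alpha-\alpha_1}g$ carry one $x$-derivative, so there is no ``lower-order factor'': placing either of them in $L^\infty_x(\Omega)$ via $H^2_x\hookrightarrow L^\infty_x$ would cost three derivatives and leave the $H^2_x$ framework. The paper handles exactly this term with the intermediate embeddings $\|\cdot\|_{L^3_x}\lesssim\|\cdot\|_{H^1_x}$ and $\|\cdot\|_{L^6_x}\lesssim\|\na_x\cdot\|_{L^2_x}$, pairing $L^3_x\times L^6_x$; you need this (or an equivalent Gagliardo--Nirenberg interpolation) for the derivative count to work.

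For \eqref{gamma} itself the paper offers no proof --- it is quoted from Guo and Strain--Guo for Landau and from Gressman--Strain, Duan et al.\ for Boltzmann --- so your decision to derive it from scratch is a genuinely different and far more ambitious route, and it is where the real gaps lie. First, in the Landau case the dissipation norm is anisotropic: $\sigma^{ij}\pa_{v_i}g\,\pa_{v_j}g$ weights the component of $\na_vg$ orthogonal to $v$ by $\<v\>^{\gamma+2}$ but the radial component only by $\<v\>^{\gamma}$, so the step ``the factor $w^2\pa_{v_i}h$ matches the gradient part of $|g|_{L^2_{D,w}}$ and $|h|_{L^2_{D,w}}$ directly'' conceals the main difficulty of the known proofs, which must decompose $\na_v$ into $P_v$ and $\I-P_v$ parts and track distinct weights on each. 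Second, your claimed admissibility condition $\vt\le\gamma+4$ is not the hypothesis of the lemma: \eqref{qvt} allows $\vt\in[1,2]$ for every soft Landau potential, including $\gamma=-3$ where $\gamma+4=1$, so your bookkeeping would wrongly exclude all $\vt\in(1,2]$ at Coulomb interaction. The constraint actually needed (and used in the cited weighted estimates) is $\vt\le 2$ with $q$ restricted at the endpoint, obtained by distributing the factor $\<v\>^{\vt-1}$ from $\na_v(w^2)$ against the zeroth-order weight $\<v\>^{(\gamma+4)/2}$ hidden in $|h|_{L^2_{D,w}}$ rather than against the gradient weights alone. Unless you intend to reprove the weighted trilinear estimates in full, the honest move is to cite them, as the paper does, and reserve your own work for \eqref{gammax}.
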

\begin{proof}
	The proof of \eqref{gamma} can be found in \cite[Theorem 3; Lemma 10]{Guo2002a, Strain2007} and \cite[ Lemma 2.3; Lemma 2.4; (6.6)]{Gressman2011, Duan2013a, Fan2017}. 
Note that when $\gamma\ge -2$ in Landau case and $\Gamma+2s\ge 0$ in Boltzmann case, we have $q=0$ and hence $w=1$. That is, it's not necessary to include any velocity weight in the {\em hard} potential cases. 
To prove \eqref{gammax}, we apply \eqref{gamma} to estimate 
	\begin{align}\label{1.9}\notag
		&\quad\,\int_{\Omega}|({w^2\partial^\alpha\Gamma(f,g)},\partial^\alpha h)_{L^2_{v}}|\,dx\\
		&\lesssim \int_{\Omega}\sum_{\alpha_1\le\alpha}\Big(|{w\partial^{\alpha_1} f}|_{L^2_v}|{w\partial^{\alpha-\alpha_1} g}|_{L^2_{D}}+|{w\partial^{\alpha_1} f}|_{L^2_D}|{w\partial^{\alpha-\alpha_1} g}|_{L^2_{v}}\Big)\,dx\ \|w\partial^\alpha h\|_{L^2_xL^2_D}.
	\end{align}
Here we firstly consider the parts $|{\partial^{\alpha_1} f}|_{L^2_v}|{\partial^{\alpha-\alpha_1} g}|_{L^2_{D}}$. 
\begin{align*}
	\int_{\Omega}\sum_{\alpha_1\le\alpha}|w{\partial^{\alpha_1} f}|_{L^2_v}|w{\partial^{\alpha-\alpha_1} g}|_{L^2_{D}}\,dx
		&\notag\lesssim \sum_{|\alpha_1|=0}\|w{\partial^{\alpha_1} f}\|^2_{L^\infty_xL^2_v}\|w{\partial^{\alpha-\alpha_1} g}\|_{L^2_xL^2_{D}}\\
		&\notag\qquad+\sum_{|\alpha_1|=1}\|w{\partial^{\alpha_1} f}\|_{L^3_xL^2_v}\|w{\partial^{\alpha-\alpha_1} g}\|_{L^6_xL^2_{D}}\\
		&\notag\qquad+\sum_{|\alpha_1|=2}\|w{\partial^{\alpha_1} f}\|_{L^2_xL^2_v}\|w{\partial^{\alpha-\alpha_1} g}\|_{L^\infty_xL^2_{D}}\\
		&\lesssim \|{wf}\|_{H^2_xL^2_v}\|{wg}\|_{H^2_xL^2_{D}},
	\end{align*}
where we used embedding
 $\|f\|_{L^3_x(\Omega)}\lesssim\|f\|_{H^1_x(\Omega)}$, $\|f\|_{L^6_x(\Omega)}\lesssim\|\na_xf\|_{L^2_x(\Omega)}$ and $\|f\|_{L^\infty_x(\Omega)}\lesssim\|f\|_{H^2_x(\Omega)}$ from \cite[Section V and (V.21)]{Adams2003}. 
  Similarly, 
\begin{align*}
\int_{\Omega}\sum_{\alpha_1\le\alpha}|{\partial^{\alpha_1} f}|_{L^2_D}|{\partial^{\alpha-\alpha_1} g}|_{L^2_{v}}\,dx
&\lesssim \|{f}\|_{H^2_xL^2_D}\|{g}\|_{H^2_xL^2_{v}}.
\end{align*}
Plugging the above estimates into \eqref{1.9}, we obtain \eqref{gammax}. 	
\end{proof}

	\section{Macroscopic Estimates}\label{Sec3}

In this section we will derive the {\em a priori} estimates for the macroscopic part of a solution to the equation:
	\begin{align}\label{10}
		\partial_t{f}+v\cdot\nabla_x{f} - L {f} = g,\quad f|_{t=0}= f_0, 
	\end{align}
	with boundary condition \eqref{specular}, where $g$ is a chosen to be $0$ or $\Gamma(f,f)$.

To find the macroscopic dissipation, we take the following velocity moments
\begin{equation*}
	\mu^{\frac{1}{2}}, v_j\mu^{\frac{1}{2}}, \frac{1}{6}(|v|^2-3)\mu^{\frac{1}{2}},
	(v_j{v_m}-1)\mu^{\frac{1}{2}}, \frac{1}{10}(|v|^2-5)v_j \mu^{\frac{1}{2}}
\end{equation*}
with {$1\leq j,m\leq 3$} for the equation \eqref{10}. One sees that  
the coefficient functions $[a,b,c]=[a,b,c](t,x)$ satisfy the fluid-type system 
\begin{equation}\label{11}
	\left\{\begin{array}{l}
		\dis \pa_t a +\nabla_x \cdot b=0,\\
		\dis \pa_t b +\na_x (a+2c)+\na_x\cdot \Theta (\{\I-\P\} f)=0,\\[2mm]
		\dis \pa_t c +\frac{1}{3}\na_x\cdot b +\frac{1}{6}\na_x\cdot
		\Lambda (\{\I-\P\} f)=0,\\[2mm]
		\dis \pa_t[\Theta_{{ jm}}(\{\I-\P\} f)+2c\de_{{ jm}}]+\pa_jb_m+\pa_m
		b_j=\Theta_{jm}({r}+{h}),\\[2mm]
		\dis \pa_t \Lambda_j(\{\I-\P\} f)+\pa_j c = \Lambda_j({r}+{h}),
	\end{array}\right.
\end{equation}
where the
high-order moment functions $\Theta=(\Theta_{jm})_{3\times 3}$ and
$\Lambda=(\Lambda_j)_{1\leq j\leq 3}$ are respectively defined by
\begin{equation}
	\Theta_{jm}(f) = \left ((v_jv_m-1)\mu^{\frac{1}{2}}, f\right)_{L^2_v},\ \ \
	\Lambda_j(f)=\frac{1}{10}\left ((|v|^2-5)v_j\mu^{\frac{1}{2}},
	f\right)_{L^2_v},\notag
\end{equation}
with the inner product taken with respect to velocity variable $v$ only, and the terms ${r}$ and ${h}$ on the right are given by
\begin{equation*}
	{r}= -{v}\cdot \na_{{x}} \{\I-\P\}f,\ \ {h}=L \{\I-\P\}f+g.
\end{equation*}

Next, we derive the specular reflection boundary condition for high-order derivatives of solution to \eqref{1} and the boundary values for $[a,b,c]$. 
\begin{Lem}\label{Lem24}
	Let $f$ be the solution to \eqref{1} satisfying \eqref{specular}. Then we have the following identities on boundary $\big\{(x,v) : v\cdot n(x)\neq 0$ and $x$ belongs to the interior of $\Gamma_i(i=1,2,3)\big\}$:
	\begin{equation}\label{115q}
		f(x,v)=f(x,R_xv),
	\end{equation}
	and
	\begin{equation}\label{115c}\begin{aligned}
			\partial_{\tau_j}f(x,R_xv) &= \partial_{\tau_j}f(x,v),\\
			\partial_{\tau_{j}\tau_k}f(x,R_xv) &= \partial_{\tau_j\tau_k}f(x,v),
		\end{aligned}
	\end{equation}for $j,k=1,2$, 
	where $(n(x),\tau_1(x),\tau_2(x))$ forms a unit normal basis in $\R^3$.
	Then for the normal derivatives, on $\big\{(x,v) : v\cdot n(x)\neq 0$ and $x$ belongs to the interior of $\Gamma_i(i=1,2,3)\big\}$, we have
	\begin{align}\label{115}
		\partial_{n}f(x,R_xv) &= -\partial_{n}f(x,v),\\
		\label{115b}
		\partial_{\tau_j}\partial_{n}f(x,R_xv) &= -\partial_{\tau_j}\partial_{n}f(x,v),
	\end{align}
	for $j=1,2$, 
	and
	\begin{equation}\label{115d}
		\partial^2_{n}f(x,R_xv) = \partial^2_{n}f(x,v).
	\end{equation}
\end{Lem}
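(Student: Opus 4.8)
The plan is to exploit that on the interior of each face $\Gamma_i$ the outward normal $n(x)=\pm e_i$ is locally constant, so that the reflection $R_x$ reduces to the fixed linear involution $R$ that flips the $i$-th velocity component, and the associated derivatives satisfy $\partial_n=\pm\partial_{x_i}$ while $\partial_{\tau_1},\partial_{\tau_2}$ are the two coordinate derivatives $\partial_{x_k}$, $k\neq i$. Two structural facts will be used repeatedly. First, since $\mu$ is radial and the Landau/Boltzmann kernels are invariant under orthogonal maps, both $L$ and $\Gamma$ are covariant under $R$, i.e. $(Lg)(R\,\cdot)=L(g(R\,\cdot))$ and $\Gamma(g(R\,\cdot),h(R\,\cdot))=\Gamma(g,h)(R\,\cdot)$. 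Second, spatial coordinate derivatives commute and pass through $R$ by the chain rule, because $R$ does not depend on $x$ on the interior of $\Gamma_i$. Identity \eqref{115q} itself is just \eqref{specular} upgraded to all $v\cdot n\neq 0$: for $v\cdot n>0$ the point $R_xv$ lies in $\gamma_-$, so \eqref{specular} applied there together with $R_xR_x=\I$ gives $f(x,v)=f(x,R_xv)$.

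First I would prove the tangential identities \eqref{115c}. Because $R$ is $x$-independent on the interior of $\Gamma_i$, applying $\partial_{\tau_j}$ to \eqref{115q} and using the chain rule on the position slot yields $(\partial_{\tau_j}f)(x,R_xv)=(\partial_{\tau_j}f)(x,v)$ with no sign change; applying a second tangential derivative gives the second line of \eqref{115c}. In short, tangential differentiation preserves the $R$-evenness of $f(x,\cdot)$ on the boundary.

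The normal identity \eqref{115} is the crux and cannot be obtained by differentiating \eqref{115q} in the normal direction, since that identity holds only along the surface. Instead I would use the equation: decomposing $v\cdot\na_xf=(v\cdot n)\partial_nf+(v\cdot\tau_1)\partial_{\tau_1}f+(v\cdot\tau_2)\partial_{\tau_2}f$ in \eqref{1} and solving for $\partial_nf$ gives, on $v\cdot n\neq 0$,
\begin{equation*}
	\partial_nf=\frac{1}{v\cdot n}\Big(Lf+\Gamma(f,f)-\partial_tf-(v\cdot\tau_1)\partial_{\tau_1}f-(v\cdot\tau_2)\partial_{\tau_2}f\Big).
\end{equation*}
Evaluating at $R_xv$, I would check term by term that the parenthesis is even under $v\mapsto R_xv$: the terms $Lf$ and $\Gamma(f,f)$ are even by the covariance of $L,\Gamma$ together with \eqref{115q}; $\partial_tf$ is even because \eqref{115q} holds for every $t$; and each tangential term is even because $R_xv\cdot\tau_j=v\cdot\tau_j$ while $(\partial_{\tau_j}f)(x,R_xv)=(\partial_{\tau_j}f)(x,v)$ by \eqref{115c}. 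Since the prefactor obeys $1/(R_xv\cdot n)=-1/(v\cdot n)$, the two sign contributions combine to produce the minus sign in \eqref{115}. Then \eqref{115b} follows by differentiating \eqref{115} tangentially exactly as before, the $R$-oddness being preserved.

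Finally, for the second normal derivative \eqref{115d} I would run the same mechanism one order higher. Differentiating \eqref{1} in $x_i$ and using $\partial_{x_i}\Gamma(f,f)=\Gamma(\partial_{x_i}f,f)+\Gamma(f,\partial_{x_i}f)$ produces an equation for $\partial_{x_i}f=\pm\partial_nf$; solving for its normal derivative $\partial_n\partial_{x_i}f=\pm\partial_n^2f$ gives the analogue of the displayed formula with $f$ replaced by $\partial_{x_i}f$. Now $\partial_{x_i}f$ is $R$-odd on the boundary by \eqref{115}, so $L\partial_{x_i}f$, $\Gamma(\partial_{x_i}f,f)+\Gamma(f,\partial_{x_i}f)$, $\partial_t\partial_{x_i}f$ and the tangential terms are all odd; multiplied by the odd prefactor $1/(v\cdot n)$, the result is even, which is \eqref{115d}. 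The main obstacle is precisely this routing of normal derivatives through the PDE: one must verify the reflection parity of the collision terms, and in particular establish the covariance $\Gamma(g(R\,\cdot),h(R\,\cdot))=\Gamma(g,h)(R\,\cdot)$ of the bilinear operator, which is where the orthogonal invariance of the kernels and the radiality of $\mu$ enter decisively.
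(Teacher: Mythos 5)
Your proposal is correct and follows essentially the same route as the paper: tangential identities by differentiating the specular condition along the (locally constant) face, the normal identity by solving the equation for $\partial_n f$ and checking the parity of each term using the reflection covariance of $L$ and $\Gamma$ (which the paper verifies via the Carleman representation in the Boltzmann case and Guo's formula \eqref{Landau} in the Landau case), and then iterating once more for the second normal derivative. No substantive differences to report.
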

\begin{proof}
	Recall that $R_xv=v-2n(x)(n(x)\cdot v)$ maps $\gamma_-$ to $\gamma_+$. Then by \eqref{specular}, we have that on $\Gamma_i$$(i=1,2,3)$, 
	\begin{equation*}
		f(x,v)=f(x,R_xv),\ \text{ on } n(x)\cdot v\neq 0.
	\end{equation*}
	Note that on $\Gamma_i$, $R_xv$ sends $v_i$ to $-v_i$ and keep the other component the same while $\partial_{\tau_j}(j=1,2)$ differentiate along direction $x_k$ with $k\neq i$. Then we have 
	\eqref{115c}. 
	Next we claim that 
	\begin{equation}\label{2.10}
		Lf(x,v)=Lf(x,R_xv)\text{ and }g(R_xv)=g(v),\ \text{ on } n(x)\cdot v\neq 0,
	\end{equation}for any $x$ belongs to the interior of $\Gamma_i$. 
	Indeed, by \eqref{115q}, it suffices to show that 
	\begin{equation*}
		\Gamma(g_1,g_2)(R_xv)=\Gamma(g_1(R_xv),g_2(R_xv)),
	\end{equation*} for any $g_1,g_2$.
	For the Boltzmann case, we apply the Carleman representation \eqref{Carleman} to find that 
	\begin{align*}
		\Gamma(g_1,g_2)(R_xv) &= \int_{\R^3_h}\int_{E_{0,h}}\tilde{b}(\alpha,h)\1_{|\alpha|\ge|h|}\frac{|\alpha+h|^{\gamma+1+2s}}{|h|^{3+2s}}\mu^{1/2}(R_xv+\alpha-h)\\&\qquad\qquad\qquad\times\big(g_1(R_xv+\alpha)g_2(R_xv-h)-g_1(R_xv+\alpha-h)g_2(R_xv)\big)\,d\alpha dh\\ &=\Gamma(g_1(R_xv),g_2(R_xv)),
	\end{align*}where we apply rotation $R_x^{-1}$ on $(\alpha,h)$. 
	For the Landau case, we will apply the formula from \cite[Lemma 1]{Guo2002a}:
	\begin{align}
		\label{Landau}\notag
		\Gamma(f,g) &= \partial_{v_i}\Big[\Big\{\phi^{ij}*[\mu^{1/2}f]\Big\}\partial_{v_j}g\Big]
		-\Big\{\phi^{ij}*\Big[\frac{v_i}{2}\mu^{1/2}f\Big]\Big\}\partial_jg\\
		&\quad-\partial_{v_i}\Big[\Big\{\phi^{ij}*[\mu^{1/2}\partial_{v_j}f]\Big\}g\Big]
		-\Big\{\phi^{ij}*\Big[\frac{v_i}{2}\mu^{1/2}\partial_jf\Big]\Big\}g.
	\end{align}
	Noticing $\partial_{v_i}g(R_xv)=-\partial_{v_i}(g(R_xv))$ on $\Gamma_i$, $i=1,2,3$ and $\partial_{v_i}g(R_xv)=\partial_{v_i}(g(R_xv))$ on $\Gamma_j$, $j\neq i$, one can deduce that on $\Gamma_i$, 
	\begin{align*}
		&\quad\,\sum_{j,k=1}^3\partial_{v_j}\Big[\Big\{\phi^{jk}*[\mu^{1/2}f]\Big\}\partial_{v_k}g\Big](R_xv)\\
		&= \sum_{k=1}^3\partial_{v_i}\Big[-\Big\{\phi^{ik}*[\mu^{1/2}f]\Big\}(R_xv)\partial_{v_k}g(R_xv)\Big]+ \sum_{j\neq i}\sum_k\partial_{v_j}\Big[\Big\{\phi^{jk}*[\mu^{1/2}f]\Big\}(R_xv)\partial_{v_k}g(R_xv)\Big]\\
		&= \partial_{v_i}\Big[\Big\{\phi^{ii}*[\mu^{1/2}f]\Big\}(R_xv)\partial_{v_i}(g(R_xv))\Big]+
		\sum_{k\neq i}\partial_{v_i}\Big[\Big\{\phi^{ik}*[\mu^{1/2}f(R_xv)]\Big\}\partial_{v_k}(g(R_xv))\Big]\\
		&\quad+
		\sum_{j\neq i}\partial_{v_j}\Big[\Big\{\phi^{ji}*[\mu^{1/2}f(R_xv)]\Big\}\partial_{v_i}(g(R_xv))\Big]+
		\sum_{j\neq i, k\neq i}\partial_{v_j}\Big[\Big\{\phi^{jk}*[\mu^{1/2}f]\Big\}(R_xv)\partial_{v_k}g(R_xv)\Big]\\
		&= \sum_{j,k=1}^3\partial_{v_j}\Big[\Big\{\phi^{jk}*[\mu^{1/2}f(R_xv)]\Big\}\partial_{v_k}(g(R_xv))\Big],
	\end{align*}where we used $\phi^{ik}(R_xv) = -\phi^{ik}(v)$, $\phi^{ji}(R_xv)=-\phi^{ji}(v)$ when $k\neq i$, $j\neq i$. Similar calculation can be applied to second to fourth term in \eqref{Landau}. Thus		
	$\Gamma(g_1,g_2)(R_xv)=\Gamma(g_1(R_xv),g_2(R_xv))$ on $\Gamma_i$$(i=1,2,3)$. This completes the claim.
	
	With the above claim, 
	using identity 
	\begin{equation}\label{111a}
		v\cdot\nabla_xf = v\cdot n(x)\partial_{n}f + v\cdot \tau_1(x)\partial_{\tau_1}f +v\cdot \tau_2(x)\partial_{\tau_2}f, 
	\end{equation} 
	we can apply equation \eqref{10} to define normal derivative $\partial_{n}f$ on interior of $\Gamma_i$:
	\begin{align*}
		v\cdot n\partial_nf = -v\cdot \tau_1(x)\partial_{\tau_1}f -v\cdot \tau_2(x)\partial_{\tau_2}f -\partial_tf +Lf +\Gamma(f,f).
	\end{align*}
	Then by \eqref{115c} and \eqref{2.10} we have 
	\begin{align*}
		R_xv\cdot n(x)\partial_{n}f(x,R_xv) = v\cdot n(x)\partial_{n}f(x,v), \quad \text{on }v\cdot n(x)\neq 0, \,\forall\,x\in\Gamma_i.
	\end{align*}This gives \eqref{115}. 
	When $g=\Gamma(f,f)$, we have $\partial_ng=\Gamma(\partial_nf,f)+\Gamma(f,\pa_nf)$. Again using Carleman representation \eqref{Carleman} for Boltzmann equation and \eqref{Landau} for Landau equation, we have from \eqref{115} that 
	\begin{equation}\label{115a}
		\partial_nLf(x,R_xv) = -\partial_nLf(x,v),\quad \partial_ng(x,R_xv) = -\partial_ng(x,v),
	\end{equation}on $v\cdot n(x)\neq 0$, for $x\in\Gamma_i$ and 
	\begin{equation*}\begin{aligned}
			R_xv\cdot \tau_1(x)\partial_{\tau_1}\partial_{n}f(x,R_xv) = -v\cdot \tau_1(x)\partial_{\tau_1}\partial_{n}f(x,v),\\ \quad R_xv\cdot \tau_2(x)\partial_{\tau_2}\partial_{n}f(x,R_xv) = -v\cdot \tau_2(x)\partial_{\tau_2}\partial_{n}f(x,v),
		\end{aligned}
	\end{equation*}on $v\cdot n(x)\neq 0$, for $x\in\Gamma_i$. These identities give \eqref{115b}. 
	Again we apply the equation \eqref{10} and \eqref{111a} to define second normal derivative $\partial^2_{n}f$ on interior of $\Gamma_i$:
	\begin{align*}
		v\cdot n\partial_n\partial_nf = -v\cdot \tau_1(x)\partial_{\tau_1}\partial_nf -v\cdot \tau_2(x)\partial_{\tau_2}\partial_nf -\partial_t\partial_nf +L\partial_nf +\pa_n g.
	\end{align*} 
	Then applying \eqref{115}, \eqref{115a} and \eqref{115b}, we have 
	\begin{align*}
		R_xv\cdot n(x)\partial^2_{n}f(x,R_xv) = -v\cdot n(x)\partial^2_{n}f(x,v), .
	\end{align*}$\text{on }v\cdot n(x)\neq 0$, for $x\in\Gamma_i$. This completes the proof of Lemma \ref{Lem24}
\end{proof}

\begin{Rem}
	
	(1)	Note that \eqref{115q}, \eqref{115c}, \eqref{115}, \eqref{115b}, \eqref{115d} are only valid on the interior of $\Gamma_i$. However, the intersection of $\Gamma_i$'s (the boundary of $\Gamma_i$'s) is of zero spherical measure and hence, the integration on those intersection doesn't influence the whole boundary integration $\int_{\partial\Omega}= \sum_i\int_{\Gamma_i}$. 
	
	(2) Note that we are using the equation to define boundary value for $f$. One can also assume these boundary conditions initially and regard them as compatible conditions, since they are satisfied if the solution exists. 
\end{Rem}
As a corollary, by definition \eqref{abc} for $[a,b,c]$, we have the following boundary value.
\begin{Coro}\label{Lem25}
	For $i=1,2,3$ and any $x$ belongs to interior of $\Gamma_i$, we have 
	\begin{align}\label{Lem25a}
		\partial_{x_i}c(x) =\partial_{x_i}a(x)=\partial_{x_i}b_j(x)=	b_i(x) = 0,
	\end{align}for $j\neq i$. 
	As a consequence, 
	\begin{equation}\label{2.14}
		\begin{aligned}
			\sum_{i,j=1}^3\|\partial_{x_ix_j}a\|^2_{L^2_x} = \|\Delta_xa\|^2_{L^2_x},\\ \sum_{i,j=1}^3\|\partial_{x_ix_j}b\|^2_{L^2_x} = \|\Delta_xb\|^2_{L^2_x},\\ \sum_{i,j=1}^3\|\partial_{x_ix_j}c\|^2_{L^2_x} = \|\Delta_xc\|^2_{L^2_x}. 
		\end{aligned}
	\end{equation}
\end{Coro}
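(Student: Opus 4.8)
The plan is to prove the two assertions in order: first the pointwise boundary relations \eqref{Lem25a}, which follow from a parity argument in the velocity variable, and then the Hessian/Laplacian identities \eqref{2.14}, which follow by integrating a pointwise divergence identity and showing that the resulting boundary flux vanishes by virtue of \eqref{Lem25a}. Throughout I use that on the interior of $\Gamma_i$ the reflection $R_x v=v-2n(x)(n(x)\cdot v)$ merely flips the $i$-th velocity component, $v_i\mapsto -v_i$, while keeping the others fixed, and that $\partial_{x_i}=\pm\partial_n$ there.

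For \eqref{Lem25a} I would read each claim off from the definitions \eqref{abc} as an elementary parity cancellation. The boundary condition \eqref{115q} says $f(x,\cdot)$ is \emph{even} in $v_i$ on $\Gamma_i$, while \eqref{115} (together with $\partial_{x_i}=\pm\partial_n$) says $\partial_{x_i}f(x,\cdot)$ is \emph{odd} in $v_i$. Since $v_i\mu^{1/2}$ is odd in $v_i$ and $f$ is even, the integrand of $b_i=(f,v_i\mu^{1/2})_{L^2_v}$ is odd in $v_i$, so $b_i=0$. Since $\mu^{1/2}$, $(|v|^2-3)\mu^{1/2}$, and $v_j\mu^{1/2}$ with $j\neq i$ are all even in $v_i$ while $\partial_{x_i}f$ is odd, the integrands of $\partial_{x_i}a$, $\partial_{x_i}c$, and $\partial_{x_i}b_j$ are odd in $v_i$, whence these all vanish; differentiation under the integral sign is harmless because of the Gaussian weight.

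For \eqref{2.14} I would use, for each scalar $u\in\{a,c,b_1,b_2,b_3\}$, the pointwise divergence identity
\begin{equation*}
	|\Delta_x u|^2-\sum_{i,j}|\partial_{x_ix_j}u|^2=\sum_k\partial_{x_k}\Big(\partial_{x_k}u\,\Delta_x u-\sum_i\partial_{x_i}u\,\partial_{x_ix_k}u\Big),
\end{equation*}
which one checks by expanding and commuting derivatives. Integrating over $\Omega$ and applying the divergence theorem, the left side becomes the boundary flux $\int_{\partial\Omega}\big(\partial_n u\,\Delta_x u-\sum_{i,k}\partial_{x_i}u\,\partial_{x_ix_k}u\,n_k\big)\,dS$, so it suffices to show this flux vanishes on each face $\Gamma_m$, where $n=\pm e_m$ and only the $k=m$ term survives. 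For $u=a$ or $u=c$, \eqref{Lem25a} gives $\partial_{x_m}u=0$ on $\Gamma_m$; since $\Gamma_m$ lies in a hyperplane $\{x_m=\text{const}\}$, differentiating this relation along the tangential directions $x_i$ $(i\neq m)$ also forces $\partial_{x_ix_m}u=0$ there, and both flux terms collapse. The off-diagonal components $b_l$ $(l\neq m)$ are identical, with $\partial_{x_m}b_l=0$ from \eqref{Lem25a} playing the role of $\partial_{x_m}a=0$.

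The main obstacle is the diagonal component $b_m$ on $\Gamma_m$, where \eqref{Lem25a} yields only $b_m=0$ (not $\partial_{x_m}b_m=0$), so the normal term $\partial_n b_m\,\Delta_x b_m$ is not immediately zero. Here I would instead exploit that $b_m\equiv 0$ on the entire two-dimensional face, which forces \emph{all} tangential derivatives to vanish: $\partial_{x_i}b_m=0$ and $\partial_{x_ix_i}b_m=0$ for $i\neq m$ on $\Gamma_m$. Consequently $\Delta_x b_m=\partial_{x_mx_m}b_m$ on $\Gamma_m$, and in the sum $\sum_i\partial_{x_i}b_m\,\partial_{x_ix_m}b_m$ only the $i=m$ entry $\partial_{x_m}b_m\,\partial_{x_mx_m}b_m$ survives; the two surviving contributions then cancel exactly, so the flux is again zero. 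Summing the resulting equalities over the components of $b$ yields the three identities in \eqref{2.14}.
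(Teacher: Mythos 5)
Your proof is correct. The first part (\eqref{Lem25a}) is the same argument as the paper's: the paper performs the change of variables $v\mapsto R_xv$ in each moment integral and invokes \eqref{115q} and \eqref{115}, which is precisely your even/odd parity cancellation in $v_i$ stated in different language.

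For \eqref{2.14} you take a somewhat different route. The paper treats each off-diagonal pair $(i,j)$, $i\neq j$, separately: two successive integrations by parts turn $\int_\Omega|\partial_{x_ix_j}u|^2\,dx$ into $\int_\Omega\partial_{x_ix_i}u\,\partial_{x_jx_j}u\,dx$, and the two boundary terms (one on $\Gamma_i$, one on $\Gamma_j$) each die because either a tangential first derivative or a tangential derivative of a quantity vanishing on the face appears as a factor. You instead package the whole computation into the single Rellich-type divergence identity
$|\Delta_x u|^2-\sum_{i,j}|\partial_{x_ix_j}u|^2=\sum_k\partial_{x_k}\big(\partial_{x_k}u\,\Delta_x u-\sum_i\partial_{x_i}u\,\partial_{x_ix_k}u\big)$
and kill the total flux face by face. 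The underlying inputs are identical (the relations \eqref{Lem25a} plus the observation that tangential derivatives of a function vanishing on a flat face also vanish), but your organization forces you to confront the diagonal term $\partial_{x_m}b_m\,\Delta_x b_m$ on $\Gamma_m$, which you resolve by the exact cancellation against $\partial_{x_m}b_m\,\partial_{x_mx_m}b_m$; the paper's pairwise organization never produces this term because the boundary contributions it generates always contain an off-diagonal or tangential factor. Your version is slightly more global and makes the structure of the identity transparent; the paper's is more elementary and avoids the diagonal cancellation entirely. Both carry the same (standard, removable by density) caveat that the intermediate expressions formally involve third derivatives of $a,b,c$.
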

\begin{proof}
	Notice that $\partial_nf = \pm\partial_{x_i}f$ on $\Gamma_i$. Then
	by \eqref{115} and change of variable $v\mapsto R_xv$, we have on interior of $\Gamma_i$ that 
	\begin{equation*}
		\partial_{x_i}c = \int_{\R^3}\partial_{x_i}f(x,R_xv)|R_xv|^2\mu^{1/2}(R_xv)\,dv
		= -\int_{\R^3}\partial_{x_i}f(x,v)|v|^2\mu^{1/2}(v)\,dv = 0. 
	\end{equation*}
	Similarly, on interior of $\Gamma_i$, we have 
	\begin{equation*}
		\partial_{x_i}a = \int_{\R^3}\partial_{x_i}f(x,R_xv)\mu^{1/2}(R_xv)\,dv
		= -\int_{\R^3}\partial_{x_i}f(x,v)\mu^{1/2}(v)\,dv = 0.  
	\end{equation*}
	For $j\neq i$, noticing $(R_xv)_j=v_j$ on $\Gamma_i$, we have 
	\begin{equation*}
		\partial_{x_i}b_j = \int_{\R^3}\partial_{x_i}f(x,R_xv)(R_xv)_j\mu^{1/2}(R_xv)\,dv
		= -\int_{\R^3}\partial_{x_i}f(x,v)v_j\mu^{1/2}(v)\,dv = 0.  
	\end{equation*}
	On interior of $\Gamma_i$, we have $(R_xv)_i=-v_i$ and hence by \eqref{115q}, 
	\begin{equation*}
		b_i(x) = \int_{\R^3}f(x,R_xv)(R_xv)_i\mu^{1/2}(R_xv)\,dv
		= -\int_{\R^3}f(x,v)v_i\mu^{1/2}(v)\,dv = 0. 
	\end{equation*}
	For $\eqref{2.14}$, notice that for $i\neq j$, $\partial_{x_ix_j}a = 0$ on $\Gamma_i$ and $\partial_{x_j}a=0$ on $\Gamma_j$. Then 
	\begin{align*}
		\int_{\Omega}|\partial_{x_ix_j}a|^2\,dx
		&= \int_{\Gamma_i}\partial_{x_ix_j}a\,\partial_{x_j}a\,dS(x) - \int_{\Gamma_j}\partial_{x_ix_i}a\,\partial_{x_j}a\,dx + \int_{\Omega}\partial_{x_ix_i}a\,\partial_{x_jx_j}a\,dx
		\\&= \int_{\Omega}\partial_{x_ix_i}a\partial_{x_jx_j}a\,dx, 
	\end{align*}
	where $dS$ is the spherical measure. Then we have $\sum_{i,j}\|\partial_{x_ix_j}a\|^2_{L^2_x} = \|\Delta_xa\|^2_{L^2_x}$. Similar argument can be applied to $c$ and one can deduce \eqref{2.14}$_1$ and \eqref{2.14}$_3$. 
	For $\eqref{2.14}_2$, notice that for $j\neq i$, we have 
	\begin{align*}
		\partial_{x_ix_j}b_k= 0 \text{  or  }\partial_{x_j}b_k=0, \text{  on }\Gamma_i,\\
		\partial_{x_ix_i}b_k= 0 \text{  or  }\partial_{x_j}b_k=0, \text{  on }\Gamma_j.
	\end{align*}
	Then we have 
	\begin{align*}
		\int_{\Omega}|\partial_{x_ix_j}b_k|^2\,dx
		&= \int_{\Gamma_i}\partial_{x_ix_j}b_k\,\partial_{x_j}b_k\,dS(x) - \int_{\Gamma_j}\partial_{x_ix_i}b_k\,\partial_{x_j}b_k\,dx + \int_{\Omega}\partial_{x_ix_i}b_k\,\partial_{x_jx_j}b_k\,dx
		\\&= \int_{\Omega}\partial_{x_ix_i}b_k\,\partial_{x_jx_j}b_k\,dx. 
	\end{align*}This implies \eqref{2.14}$_2$ and completes Corollary \ref{Lem25}. 
\end{proof}	
	
	We denote $\zeta(v)$ to be a smooth function satisfying $$
	\zeta(v)\lesssim e^{-\lambda|v|^2},$$ for some $\lambda >0$. The function $\zeta(v)$ may change from line to line. The following integration will be used frequently: if $p>-1$ is an even number, then 
	\begin{align*}
		\int_{\R}z^pe^{-\frac{|z|^2}{2}}dz = (p-1)!!\sqrt{2\pi}.
	\end{align*}
Next we write the main dissipation estimates for macroscopic parts. 
	\begin{Thm}\label{Thm31}
		Assume $\gamma>\max\{-3,-2s-\frac{3}{2}\}$ for Boltzmann case and $\gamma\ge -3$ for Landau case. 
		Then there exists a functional $\E_{int}(t)$ satisfying 
		\begin{align*}
			\E_{int}(t)\lesssim \sum_{|\alpha|\le 2}\|\partial^\alpha f\|_{L^2_xL^2_v},
		\end{align*} such that 
		\begin{align}\label{121}
			\partial_t\E_{int}(t) + \lambda\sum_{|\alpha|\le 2}\|\partial^\alpha [{a},{b},{c}]\|^2_{L^2_{x}}
			\lesssim \sum_{|\alpha|\le 2}\|({\partial^\alpha g},\zeta)_{L^2_v}\|^2_{L^2_{x}}+\sum_{|\alpha|\le 2}\|\{\I-\P\}{\partial^\alpha  f}\|^2_{L^2_{x}L^2_D},
		\end{align}for some $\lambda>0$. 
	\end{Thm}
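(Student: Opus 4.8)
The plan is to run the classical thirteen-moment interaction-functional argument on the fluid system \eqref{11}, with the one genuinely new ingredient being the vanishing of the boundary integrals generated by integration by parts in $x$. Since $\pa^\alpha$ commutes with the constant-coefficient transport operator $\pa_t+v\cdot\na_x$ and with $\P$, the field $\pa^\alpha f$ solves \eqref{10} with $g$ replaced by $\pa^\alpha g$ and has hydrodynamic coefficients $\pa^\alpha[a,b,c]$; so it suffices to build, for each $\beta$ with $|\beta|\le 1$, a functional producing $\|\na_x\pa^\beta[a,b,c]\|_{L^2_x}^2$, and then set $\E_{int}=\sum_{|\beta|\le1}(\cdots)$. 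Writing the order-two field as $\na_x\pa^\beta[a,b,c]$ with $|\beta|=1$ is the point that keeps the derivative count honest: the transport term $\pa^\beta r=-v\cdot\na_x\{\I-\P\}\pa^\beta f$ then costs only one extra $x$-derivative on the microscopic part, i.e.\ order $|\beta|+1\le 2$, which is exactly what sits on the right of \eqref{121}. The zeroth-order pieces $\|[a,b,c]\|_{L^2_x}$ are then recovered from the gradient control by Poincaré together with the conservation laws \eqref{conservatrion}.

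Next I would extract the three local dissipations separately. For $\na_x c$, I use the fifth equation $\pa_j\pa^\beta c=-\pa_t\Lambda_j(\{\I-\P\}\pa^\beta f)+\Lambda_j(\pa^\beta r+\pa^\beta h)$: differentiating $I_c:=-\sum_j(\Lambda_j(\{\I-\P\}\pa^\beta f),\pa_j\pa^\beta c)_{L^2_x}$ in time and substituting this identity yields $\|\na_x\pa^\beta c\|_{L^2_x}^2$ plus the moment errors $\Lambda_j(\pa^\beta r+\pa^\beta h)$ and a commutator in which $\pa_t\pa^\beta c$ is replaced by $-\tfrac13\na_x\cdot\pa^\beta b-\tfrac16\na_x\cdot\Lambda(\{\I-\P\}\pa^\beta f)$ via the third equation. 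For $\na_x b$, the fourth equation gives $\na_x\cdot\pa^\beta b$ from its trace ($j=m$) and the full symmetric gradient $\pa_j\pa^\beta b_m+\pa_m\pa^\beta b_j$, and the analogous time-derivative manipulation against $\Theta_{jm}(\{\I-\P\}\pa^\beta f)$ produces $\|\na_x\pa^\beta b\|_{L^2_x}^2$. For $\na_x a$, I pair the second equation $\na_x\pa^\beta a=-\pa_t\pa^\beta b-2\na_x\pa^\beta c-\na_x\cdot\Theta(\{\I-\P\}\pa^\beta f)$ with $\na_x\pa^\beta a$; the term $(\pa_t\pa^\beta b,\na_x\pa^\beta a)$ becomes $\tfrac{d}{dt}(\pa^\beta b,\na_x\pa^\beta a)-(\pa^\beta b,\na_x\pa_t\pa^\beta a)$, and $\pa_t\pa^\beta a=-\na_x\cdot\pa^\beta b$ from the first equation turns the last term into $-\|\na_x\cdot\pa^\beta b\|_{L^2_x}^2$ after one integration by parts, the $\na_x c$ pieces being absorbed into the already-controlled $c$-dissipation. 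These three functionals are then combined with constants $0<\kappa_a\ll\kappa_b\ll\kappa_c\ll1$ so that each lower dissipation dominates the cross terms.

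The crux — and the main obstacle relative to the torus or whole-space treatments — is that every integration by parts in $x$ above produces a boundary integral over $\pa\Omega=\cup_i\Gamma_i$, and these must be shown to vanish. This is exactly where Corollary \ref{Lem25} is used: on the interior of each face $\Gamma_i$ the coefficients of $\pa^\alpha f$ satisfy $b_i=0$, $\pa_{x_i}a=\pa_{x_i}c=0$, and $\pa_{x_i}b_j=0$ for $j\ne i$. For example, the boundary term created in the $a$-estimate is $\int_{\pa\Omega}(\na_x\cdot\pa^\beta b)(\pa^\beta b\cdot n)\,dS$, which vanishes because $\pa^\beta b\cdot n=\pm\pa^\beta b_i=0$ on $\Gamma_i$; the boundary terms in the $c$- and $b$-estimates vanish in the same way from $\pa_{x_i}\pa^\beta c=0$ and $\pa_{x_i}\pa^\beta b_j=0$. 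At second order, the mixed derivatives $\pa_{x_ix_j}$ are reorganized into Laplacians through the identity \eqref{2.14}, which is precisely the boundary-integral-free form of the second-order control. I expect the bulk of the careful bookkeeping to lie here: tracking which component of $[a,b,c]$ each boundary integral sees on each face and matching it to the correct vanishing relation.

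Finally I would dispose of the moment errors on the right. The transport moments $\Theta_{jm}(\pa^\beta r),\Lambda_j(\pa^\beta r)$ carry $\na_x\{\I-\P\}\pa^\beta f$ and, by Cauchy–Schwarz against the rapidly decaying test functions together with $|\<v\>^{(\gamma+2s)/2}(\cdot)|_{L^2_v}\lesssim|\cdot|_{L^2_D}$, are bounded by $\|\{\I-\P\}\pa^\alpha f\|_{L^2_xL^2_D}$ with $|\alpha|\le2$. The relaxation moments $\Theta_{jm}(L\{\I-\P\}\pa^\beta f),\Lambda_j(L\{\I-\P\}\pa^\beta f)$ are rewritten by self-adjointness of $L$ as inner products of $\{\I-\P\}\pa^\beta f$ against $L$ applied to the test weights, which lie in $(\ker L)^\perp$, hence are controlled by $|\{\I-\P\}\pa^\beta f|_{L^2_D}$ through Lemma \ref{L2}. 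The remaining moments of $g$ are of the form $(\pa^\alpha g,\zeta)_{L^2_v}$ and feed the first term on the right of \eqref{121}. Summing over $|\beta|\le1$, adding the Poincaré step for the zeroth order, and noting by Cauchy–Schwarz that $\E_{int}\lesssim\sum_{|\alpha|\le2}\|\pa^\alpha f\|^2_{L^2_xL^2_v}$ since each summand pairs an $L^2_v$-bounded moment with a macroscopic field, establishes \eqref{121}.
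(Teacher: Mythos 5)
Your overall strategy is genuinely different from the paper's: you run the classical interaction--functional argument directly on the fluid system \eqref{11} and integrate by parts in $x$, whereas the paper tests the kinetic equation \eqref{10} against functions $\Phi_{a},\Phi_b,\Phi_c$ built from elliptic problems $-\Delta_x\phi=\partial^\alpha a$ (etc.), with Dirichlet or Neumann conditions on each face $\Gamma_i$ chosen \emph{according to the parity of $\alpha_i$}, so that $\Phi$ has exactly the $v_i$-parity needed to make the boundary term $S_4$ vanish via Lemma \ref{Lem24}. That adaptivity is not a cosmetic difference: it is what the paper uses to kill every boundary integral, and it is precisely what your argument is missing.

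The concrete gap is in your claim that all boundary terms vanish by Corollary \ref{Lem25}. This works only when the extra derivative $\partial^\beta$ is tangential to the face in question. Take your $a$-estimate: the boundary term is $\int_{\Gamma_i}(\nabla_x\cdot\partial^\beta b)(\partial^\beta b\cdot n)\,dS=\pm\int_{\Gamma_i}(\nabla_x\cdot\partial^\beta b)\,\partial^\beta b_i\,dS$. For $\beta=0$ this vanishes since $b_i=0$ on $\Gamma_i$, and for $\beta=e_j$ with $j\neq i$ it vanishes because $\partial_{x_j}$ is tangential to the flat face on which $b_i\equiv 0$. But for $\beta=e_i$ you need $\partial_{x_i}b_i=0$ on $\Gamma_i$, which is \emph{not} in Corollary \ref{Lem25} and is false in general: $\partial_{x_i}f(x,R_xv)=-\partial_{x_i}f(x,v)$ and $(R_xv)_i=-v_i$, so the reflection argument gives $\partial_{x_i}b_i=\partial_{x_i}b_i$, a tautology. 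The same even-parity obstruction hits the $c$- and $b$-functionals (e.g.\ $\Lambda_j(\partial_{x_j}\{\I-\P\}f)$ on $\Gamma_j$ does not vanish), and it only gets worse at second order. Two smaller points: momentum is not among the conservation laws \eqref{conservatrion} (specular reflection does not conserve it), so the zeroth-order control of $\|b\|_{L^2_x}$ must come from the partial-trace Poincar\'e inequality using $b_i=0$ on $\Gamma_i$ rather than from a mean-zero condition; and note that the paper deliberately restricts the second-order multi-indices to $\partial_{x_ix_i}$ and recovers the mixed derivatives afterwards through \eqref{2.14}, again to keep the boundary parities favorable. To repair your proof you would have to either reorganize the normal-derivative cases so that no ill-signed boundary trace appears, or switch to the paper's $\alpha$-dependent elliptic test functions, which is the mechanism the paper actually relies on.
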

	\begin{proof}
		
		Let $|\alpha|\le 2$ and $\partial^\alpha=\partial_{x_ix_i}$$(i=1,2,3)$ if $|\alpha|=2$. Notice that from \eqref{2.14}, we only need to deal with derivatives $\partial_{x_ix_i}$ when estimating the second derivatives in $\|\pa^\alpha[{a},{b},{c}]\|^2_{L^2_{x}}$. Applying $\partial^\alpha$ to \eqref{10}, we have 
		\begin{align}\label{11a}
			\partial_t{\partial^\alpha f}+v\cdot\nabla_x{\partial^\alpha f} &- L {\partial^\alpha f} = {\partial^\alpha  g}.
		\end{align}	
		Let ${\Phi}(t,x,v)\in C^1((0,+\infty)\times\Omega\times\R^3)$ be a test function. Taking the inner product of ${\Phi}(t,x,v)$ and \eqref{11a} with respect to $(x,v)$,
we obtain 
		\begin{align*}
			&\quad\,\partial_t({\partial^\alpha f},{\Phi})_{L^2_{x,v}}(t)- ({\partial^\alpha f},\partial_t{\Phi})_{L^2_{x,v}}-({\partial^\alpha f},v\cdot{\nabla_{x}\Phi})_{L^2_{x,v}} 
			\\
			&+\int_{\partial\Omega}(v\cdot n(x){\partial^\alpha f}(x),{\Phi}(x))_{L^2_v}\,dS(x) - (L {\partial^\alpha f},{\Phi})_{L^2_{x,v}} = ({\partial^\alpha g},{\Phi})_{L^2_{x,v}},
		\end{align*}
	where $dS(x)$ is the spherical measure. 
		Using the decomposition ${f}=\P{f}+\{\I-\P\}{f}$, we have 
		\begin{align}\label{100}
			\partial_t({\partial^\alpha f},{\Phi})_{L^2_{x,v}}(t)-({\partial^\alpha \P f},v\cdot{\nabla_{x}\Phi})_{L^2_{x,v}}  = \sum_{j=1}^4S_j,
		\end{align}
		where $S_j$ are defined by 
		\begin{align*}
			 S_1 &= ({\partial^\alpha f},\partial_t{\Phi})_{L^2_{x,v}},\\
			S_2 &= ({\partial^\alpha \{\I-\P\}f},v\cdot{\nabla_{x}\Phi})_{L^2_{x,v}} ,\\
			S_3&= (L {\partial^\alpha f},{\Phi})_{L^2_{x,v}}+({\partial^\alpha g},{\Phi})_{L^2_{x,v}},\\
			S_4 &= -\int_{\partial\Omega}(v\cdot n(x){\partial^\alpha f}(x),{\Phi}(x))_{L^2_v}\,dS(x).
		\end{align*}

		\medskip \noindent{\bf Estimate on ${c}(t,x)$:} We choose the following test function 
		\begin{align*}
			{\Phi} = {\Phi_c} = (|v|^2-5)\big(v\cdot{\nabla_{x}\phi_c}(t,x)\big)\mu^{1/2},
		\end{align*}
		where 				
		\begin{equation}\label{120}\left\{\begin{aligned}
				&-\Delta_x \phi_c = {\partial^\alpha c},\\
				&{\phi_c}(x)= 0 \ \text{ on }\ x\in \Gamma_i,\ \text{ if }\alpha_i = 1,\\
				&\frac{\partial\phi_c}{\partial n}(x)= 0\ \text{ on }\ x\in \Gamma_i,\ \text{ if }\alpha_i = 0\text{ or } 2.
			\end{aligned}\right.
		\end{equation}
	The existence and uniqueness of solution to \eqref{120} is guaranteed by \cite[Lamma 4.4.3.1]{Grisvard1985}. In particular, when $|\alpha|=0$ or $\alpha_i=2$ for some $i$, \eqref{120} is pure Neumann problem and we need
	$\int_{\Omega}c\,dx=0$ and  $\int_{\Omega}\partial_{x_ix_i}c\,dx=\int_{\Gamma_i}\partial_{x_i}c\,dS(x)=0$ respectively to ensure the existence of \eqref{120}, which follows from \eqref{conservatrion} and \eqref{Lem25a}. 
	Similar to the proof for \eqref{2.14}, by using boundary value of $\phi_c$, we have 
	\begin{align}\label{3.8}
		\sum_{i,j=1}^3\|\partial_{x_ix_j}{\phi_c}\|_{L^2_{x}}^2 = \|\Delta_x\phi_c\|_{L^2_x}^2 \lesssim \|\partial^\alpha c\|^2_{L^2_x}. 
	\end{align}
Here the second inequality follows from equation \eqref{120}. 
We will discuss the value of $\alpha$ in two cases. 

If $|\alpha|=0$, then \eqref{120} is a pure Neumann boundary problem and the solution is unique up to a constant. Thus, we can choose the constant carefully such that 
$\int_\Omega \phi_c\,dx=0$. Then by Poincar\'{e}'s inequality, we have 
\begin{align*}
	\|\phi_c\|_{L^2_x}\lesssim \|\na_x\phi_c\|_{L^2_x}. 
\end{align*}
By standard elliptic estimate of \eqref{120}, we have 
\begin{align}\label{3.18a}
	\|\na_x\phi_c\|_{L^2_x}^2=|(c,\phi_c)_{L^2_x}|\lesssim \|c\|_{L^2_x}\|\na_x\phi\|_{L^2_x}.
\end{align}
This implies that 
\begin{align}\label{3.9a}
	\|\na_x\phi_c\|_{L^2_x}\lesssim \|c\|_{L^2_x}. 
\end{align}
Similarly, since $\pa_t$ doesn't affect the boundary value for $\phi_c$, we have 
\begin{align}\label{3.10a}
	\|\pa_t\na_x\phi_c\|_{L^2_x}\lesssim \|\pa_tc\|_{L^2_x}\lesssim \sum_{|\alpha|=1}\Big(\|\partial^\alpha b\|_{L^2_x}+\|\pa^\alpha\{\I-\P\}f\|_{L^2_xL^2_D}\Big). 
\end{align}

If $|\alpha| = 1$, then $\alpha_i=1$ for some $1\le i\le 3$ and $\phi_c(x)=0$ on $\Gamma_i$.
Using the boundary value for $\phi_c$, i.e. $\partial_{x_j}\phi_c=0$ or $\phi_c=0$ on $\Gamma_j$ for any $j$, we have 
\begin{align}\label{3.9}\notag
	\|\na_x\phi_c\|^2_{L^2_x} &= \sum_{j=1}^3\int_{\Gamma_j}\partial_{x_j}\phi_c\,\phi_c\,dx - \int_{\Omega}\Delta_x\phi_c\,\phi_c\,dx\\
	&= \int_{\Omega}\partial^\alpha c\,\phi_c\,dx \le \|\partial^\alpha c\|_{L^2_x}\|\phi_c\|_{L^2_x}.
\end{align}
Since $\phi_c =0$ on $\Gamma_i$, by \cite[Theorem 6.7-5]{Ciarlet2013}, we have $\|\phi_c\|_{L^2_x}\lesssim \|\na_x\phi_c\|_{L^2_x}$. Then from \eqref{3.9}, we have 
\begin{align}\label{3.10}
	\|\na_x\phi_c\|_{L^2_x}\lesssim \|\partial^\alpha c\|_{L^2_x}\lesssim \sum_{|\alpha|=1}\|\partial^\alpha c\|_{L^2_x}.
\end{align}
Similarly, since derivative on time $t$ doesn't affect the boundary value, we have 
\begin{align}\label{3.11}
	\|\partial_t\na_x\phi_c\|_{L^2_x}\lesssim \sum_{|\alpha|=1}\|\partial_t\partial^\alpha c\|_{L^2_x}\lesssim \sum_{|\alpha|=2}\Big(\|\partial^\alpha b\|_{L^2_x}+\|\pa^\alpha\{\I-\P\}f\|_{L^2_xL^2_D}\Big), 
\end{align}
where the second inequality follows from \eqref{11}. 

If $|\alpha|=2$, as stated at the beginning of the proof, we only consider the case that $\alpha_i=2$ for some $1\le i\le 3$. In this case, \eqref{120} is a pure Neumann boundary problem. Then for this $i$, similar to \eqref{3.9}, by using boundary values $\partial_{x_i}c=0$ on $\Gamma_i$ from Corollary \ref{Lem25}, we have 
\begin{align*}
	\|\na_x\phi_c\|^2_{L^2_x}
	&= \int_{\Omega}\partial_{x_ix_i} c\,\phi_c\,dx = \int_{\Gamma_i}\partial_{x_i}c\,\phi_c\,dx - \int_{\Omega}\partial_{x_i}c\,\partial_{x_i}\phi_c\,dx
	\le \|\partial_{x_i}c\|_{L^2_x}\|\partial_{x_i}\phi_c\|_{L^2_x}.
\end{align*}
This implies that 
\begin{align}\label{3.12}
	\|\na_x\phi_c\|_{L^2_x}\le \|\partial_{x_i}c\|_{L^2_x}.
\end{align}
Similarly, noticing derivative on time $t$ doesn't affect the boundary value for $\phi_c$, we have 
\begin{align}\label{3.13}
	\|\partial_t\na_x\phi_c\|_{L^2_x}\le \|\partial_t\partial_{x_i}c\|_{L^2_x}\lesssim \sum_{|\alpha|=2}\Big(\|\partial^\alpha b\|_{L^2_x}+\|\pa^\alpha\{\I-\P\}f\|_{L^2_xL^2_D}\Big).
\end{align}

		Now we can compute \eqref{100}. For the second term on left hand side of \eqref{100}, we have 
		\begin{align*}
			&\quad\,-({\partial^\alpha \P f},v\cdot{\nabla_{x}\Phi_{c}})_{L^2_{x,v}} \\
			&= -\sum_{j,m=1}^3({\partial^\alpha a}+{\partial^\alpha b}\cdot v+\frac{1}{2}(|v|^2-3){\partial^\alpha c} ,v_jv_m(|v|^2-5)\mu{\partial_j\partial_m\phi_{c}})_{L^2_{x,v}} \\
			&= 5\sum_{j=1}^3({\partial^\alpha c} ,{-\partial^2_j\phi_{c}})_{L^2_{x,v}}  = 5\|{\partial^\alpha c}\|^2_{L^2_{x,v}} .
		\end{align*}
	Note that $\int_{\R^3}|v|^4v^2_j\mu\,dv=35$, $\int_{\R^3}|v|^2v^2_j\mu\,dv=5$ and  $\int_{\R^3}v^2_j\mu\,dv=1$.  
		For $S_1$, we see from \eqref{3.10a}, \eqref{3.11} and \eqref{3.13} that 
		\begin{align*}
			|S_1|&\le|({\partial^\alpha f},\partial_t{\Phi_c})_{L^2_{x,v}}| = |(\{\I-\P\}{\partial^\alpha f},\partial_t{\Phi_c})_{L^2_{x,v}}|\\
			&\lesssim \eta\|\partial_t{\nabla_x\phi_c}\|^2_{L^2_{x}}+C_\eta\|\{\I-\P\}{\partial^\alpha f}\|^2_{L^2_{x}L^2_D}\\
			&\lesssim \eta
			\sum_{1\le|\alpha|\le2}\|\partial^\alpha b\|^2_{L^2_{x}}+C_\eta\sum_{|\alpha|\le 2}\|\{\I-\P\}{\partial^\alpha f}\|^2_{L^2_{x}L^2_D}.
		\end{align*}
		Thanks to \eqref{3.8}, $S_2$ can be estimated as 
		\begin{align*}
			|S_2|&\lesssim \eta\sum_{i,j=1}^3\|\partial_{x_ix_j}\phi_c\|_{L^2_x}^2+C_\eta\|\partial^\alpha\{\I-\P\}f\|^2_{L^2_xL^2_D}\\&\lesssim \eta\|{\partial^\alpha c}\|^2_{L^2_{x}}+C_\eta\|\{\I-\P\}{\partial^\alpha f}\|^2_{L^2_{x}L^2_D}.
		\end{align*}
		For the term $S_3$, applying \eqref{3.9a}, \eqref{3.10} and \eqref{3.12}, we have 
		\begin{align*}
			|S_3|\le \eta\sum_{|\alpha|\le 1}\|{\partial^\alpha c}\|^2_{L^2_{x}}+C_\eta\sum_{|\alpha|\le 2}\|\{\I-\P\}{\partial^\alpha f}\|^2_{L^2_{x}L^2_D}+C_\eta\sum_{|\alpha|\le 2}\|({\partial^\alpha g},\zeta)_{L^2_v}\|^2_{L^2_{x}}.
		\end{align*}
		For $S_4$, we will use the boundary condition \eqref{specular}. 
	\begin{align*}
		S_4 &= -\int_{\partial\Omega}(v\cdot n(x){\partial^\alpha f}(x),{\Phi_c}(x))_{L^2_v}\,dS(x).
	\end{align*}
 We divide the integral on $\partial\Omega$ into three parts: $\Gamma_i$, $i=1,2,3$ and consider each $\Gamma_i$ separately. Fix $i=1,2,3$, then on $\Gamma_i$, we have $\partial_{n} = \partial_{x_i}$ or $-\pa_{x_i}$ and $(\tau_1,\tau_2)$ is the vector having components $x_j$, with $j\neq i$ and $1\le j\le 3$. Then 
 \begin{multline}
 	\int_{\Gamma_i}(v\cdot n(x){\partial^\alpha f}(x),{\Phi_c}(x))_{L^2_v}\,dS(x)\\
 	= \int_{\Gamma_i}\int_{\R^3}v\cdot n(x){\partial^\alpha f}(t,x,v)(|v|^2-5)\big(v\cdot{\nabla_{x}\phi_c}(t,x)\big)\mu^{1/2}\,dvdS(x).\label{20}
 \end{multline}
If $\alpha_i=0$ or $2$, then from boundary condition \eqref{120} we know that $\pa_{x_i}\phi_c=0$ on $\Gamma_i$. Applying change of variable $v\mapsto R_xv$, \eqref{20} becomes 
	\begin{align*}
		&\quad\,\int_{\Gamma_i}\int_{\R^3}v\cdot n(x){\partial^\alpha f}(t,x,v)(|v|^2-5)\sum_{j\neq i}\big(v_j\pa_{x_j}\phi_c(t,x)\big)\mu^{1/2}\,dvdS(x)\\
		&=\int_{\Gamma_i}\int_{\R^3}R_xv\cdot n(x){\partial^\alpha f}(t,x,R_xv)(|R_xv|^2-5)\sum_{j\neq i}\big((R_xv)_j\pa_{x_j}\phi_c(t,x)\big)\mu^{1/2}\,dvdS(x)\\
		&=\int_{\Gamma_i}\int_{\R^3}(-v\cdot n(x)){\partial^\alpha f}(t,x,v)(|v|^2-5)\sum_{j\neq i}\big(v_j\pa_{x_j}\phi_c(t,x)\big)\mu^{1/2}\,dvdS(x) = 0,
	\end{align*}where we used \eqref{115q}, \eqref{115c}, \eqref{115b} and \eqref{115d}. Note that $R_x$ maps $v_j$ to $v_j$ for $j\neq i$. 

If $\alpha_i = 1$, then $\alpha_k=0$ for $k\neq i$. Using boundary condition \eqref{120}, we have $\partial_{x_j}\phi_c=0$ on $\Gamma_i$ for any $j=1,2,3$, $j\neq i$. Applying change of variable $v\mapsto R_xv$ to \eqref{20} and using \eqref{115} and \eqref{115b}, we obtain 
		\begin{align*}
		&\quad\,\int_{\Gamma_i}\int_{\R^3}v\cdot n(x){\partial^\alpha f}(t,x,v)(|v|^2-5)v_i\pa_{x_i}\phi_c(t,x)\mu^{1/2}\,dvdS(x)\\
		&\quad\,\int_{\Gamma_i}\int_{\R^3}R_xv\cdot n(x){\partial^\alpha f}(t,x,R_xv)(|R_xv|^2-5)(R_xv)_i\pa_{x_i}\phi_c(t,x)\mu^{1/2}\,dvdS(x)\\
		&\quad\,\int_{\Gamma_i}\int_{\R^3}v\cdot n(x){\partial^\alpha f}(t,x,v)(|v|^2-5)(-v_i)\pa_{x_i}\phi_c(t,x)\mu^{1/2}\,dvdS(x) = 0. 
	\end{align*}
Notice that the above estimates are valid for $i=1,2,3$. Then we obtain 
\begin{align*}
	S_4= 0. 
\end{align*}
Combining the above estimates for $S_j$$(1\le j\le 4)$, taking summation over $|\alpha|\le 2$ of \eqref{100} and letting $\eta$ suitably small, we obtain
\begin{multline}\label{122a}
	\partial_t\sum_{|\alpha|\le 2}(\partial^\alpha f,\Phi_c)_{L^2_{x,v}} + \lambda\sum_{|\alpha|\le 2}\|{\partial^\alpha c}\|^2_{L^2_{x}} \\
	\lesssim \eta \sum_{1\le|\alpha|\le2}\|{\partial^\alpha b}\|^2_{L^2_{x}}
	+C_\eta\sum_{|\alpha|\le 2}\|\{\I-\P\}{\partial^\alpha f}\|^2_{L^2_{x}L^2_D} 
	+C_\eta\sum_{|\alpha|\le 2}\|({\partial^\alpha g},\zeta)_{L^2_v}\|^2_{L^2_{x}}.
\end{multline}for some $\lambda>0$.

		\medskip \noindent{\bf Estimate of ${b}(t,x)$.}
		Now we consider the estimate of ${b}$. For this purpose we choose 
		\begin{align*}
			{\Phi}={\Phi_b}=\sum^3_{m=1}{\Phi^{j,m}_b},\ j=1,2,3,
		\end{align*}
		where 
		\begin{equation*}
			{\Phi^{j,m}_b}=\left\{\begin{aligned}
				\big(|v|^2v_mv_j{\partial_{x_m}\phi_j}-\frac{7}{2}(v_m^2-1){\partial_{x_j}\phi_j}\big)\mu^{1/2},\ m\neq j,\\
				\frac{7}{2}(v_j^2-1){\partial_{x_j}\phi_j}\mu^{1/2},\qquad\qquad\qquad m=j,
			\end{aligned}\right.
		\end{equation*}
	and $\phi_j$($1\le j\le 3$) solves  
				\begin{equation}\label{120a}\left\{\begin{aligned}
						&-\Delta_x \phi_j = {\partial^\alpha b_j},\\
						&{\phi_k}(x) =\partial_n\phi_m(x)= 0 \ \text{ on }\ x\in \Gamma_m,\ \text{ for }k\neq m, \text{ if }\alpha_m = 1,\\
						&\phi_m(x)={\partial_n\phi_k}(x)= 0\ \text{ on }\ x\in \Gamma_m,\ \text{ for }k\neq m, \text{ if }\alpha_m = 0\text{ or } 2.
			\end{aligned}\right.
		\end{equation}
	The existence of solution to \eqref{120a} is guaranteed by \cite[Lamma 4.4.3.1]{Grisvard1985} and we will explain the conditions for pure Neumann type and mixed Dirichlet-Neumann type later. 
	By using the boundary value of $\phi_j$, similar to \eqref{3.8}, we have 
	\begin{equation}\label{3.18}\begin{aligned}
			\sum_{i,k=1}^3\|\partial_{x_ix_k}{\phi_j}\|_{L^2_{x}}^2& = \|\Delta_x\phi_j\|_{L^2_x}^2 \lesssim \|\partial^\alpha b_j\|^2_{L^2_x}.
		\end{aligned} 	
	\end{equation}
Then $S_2$ can be estimated as 
\begin{align}\label{3.19a}
	|S_2|\lesssim \|\partial^\alpha\{\I-\P\}f\|_{L^2_xL^2_D}\sum_{i,j,m=1}^3\|\partial_{x_ix_m}\phi_j\|_{L^2_x}
	\lesssim C_\eta\|\partial^\alpha\{\I-\P\}f\|_{L^2_xL^2_D}+\eta \|\partial^\alpha b\|^2_{L^2_x}.
\end{align}
	Next we fix $1\le j\le 3$ and discuss the value of $|\alpha|$ as before.
	
	If $|\alpha|=0$, then $\phi_j=0$ on $\Gamma_j$ for $j=1,2,3$. Then by \cite[Theorem 6.7-5]{Ciarlet2013}, we have 
	\begin{align*}
		\|\phi_j\|_{L^2_x}\lesssim \|\na_x\phi_j\|_{L^2_x}. 
	\end{align*}
	Similar to \eqref{3.18a}, we can apply the standard elliptic estimate to obtain 
	\begin{align}\label{3.24}
		\|\na_x\phi_j\|_{L^2_x}\lesssim \|b_j\|_{L^2_x},
	\end{align}
and 
\begin{align}\label{3.24a}
	\|\pa_t\na_x\phi_j\|_{L^2_x}\lesssim \|\pa_tb_j\|_{L^2_x}.
\end{align}

	If $|\alpha|=1$, then $\alpha_i=1$ for some $1\le i\le 3$ and $\alpha_k=0$ for $k\neq i$. 
		In particular, if $j=i$, then $\partial_{x_i}\phi_i=0$ on $\Gamma_i$ and $\partial_{x_k}\phi_i=0$ on $\Gamma_k$ for $k\neq i$. In this case, \eqref{120a} is a pure Neumann boundary problem and we need $\int_{\Omega}\partial_{x_i}b_i\,dx=\int_{\Gamma_i}b_i\,dS(x)=0$ to ensure the existence for \eqref{120a}, which follows from \eqref{Lem25a}. Moreover, $\pa_{x_m}\phi_i=0$ on a subset of boundary $\partial\Omega$ with non-zero spherical measure for any $m=1,2,3$. 
	By \cite[Theorem 6.7-5]{Ciarlet2013}, we have 
	\begin{align}\label{3.21a}
		\|\partial_t\partial_{x_m}\phi_i\|_{L^2_x}\lesssim \|\partial_t\na_x\partial_{x_m}\phi_i\|_{L^2_x}
		\lesssim \|\partial_t\partial^\alpha b_i\|_{L^2_x},
	\end{align} and 
	\begin{align}\label{3.21}
		\|\partial_{x_m}\phi_i\|_{L^2_x}\lesssim \|\na_x\partial_{x_m}\phi_i\|_{L^2_x}\lesssim \|\partial^\alpha b_i\|_{L^2_x},
	\end{align}for any $m=1,2,3$, where we used \eqref{3.18} in the second inequalities.

	If $j\neq i$, then $\phi_j=0$ on $\Gamma_i$ and $\Gamma_j$ while $\partial_{x_k}\phi_j=0$ on $\Gamma_k$ for $k\neq j,i$. \eqref{120a} is a mixed Dirichlet-Neumann boundary problem. By \cite[Theorem 6.7-5]{Ciarlet2013}, we have $\|\partial_t\phi_j\|_{L^2_x}\lesssim \|\partial_t\na_x\phi_j\|_{L^2_x}$ and $\|\phi_j\|_{L^2_x}\lesssim \|\na_x\phi_j\|_{L^2_x}$. Thus, by standard elliptic estimates for \eqref{120a}, we have 
	\begin{align}\label{3.21b}
		\|\partial_t\na_x\phi_j\|_{L^2_x}\lesssim \|\partial_t\partial^\alpha b_j\|_{L^2_x},
	\end{align}
	and 
	\begin{align}\label{3.21c}
		\|\na_x\phi_j\|_{L^2_x}\lesssim \|\partial^\alpha b_j\|_{L^2_x}.
	\end{align}
	
	Next we assume $|\alpha|=2$ and $\partial^\alpha = \partial_{x_ix_i}$ for some $1\le i\le 3$.
	Then for $j=1,2,3$, $\phi_j=0$ on $\Gamma_j$ and $\partial_{x_k}\phi_j=0$ on $\Gamma_k$ for $k\neq j$.
	Thus \eqref{120a} is a mixed Dirichlet-Neumann boundary problem and by \cite[Theorem 6.7-5]{Ciarlet2013}, we know that $\|\phi_j\|_{L^2_x}\lesssim \|\nabla_x\phi_j\|_{L^2_x}$. Then from \eqref{120a}, we have 
	\begin{align*}
		\|\na_x\phi_j\|_{L^2_x}^2= \int_\Omega\partial_{x_ix_i}b_j\,\phi_j\,dx
		=  - \int_\Omega\partial_{x_i}b_j\,\partial_{x_i}\phi_j\,dx,
	\end{align*}
	where we used $\pa_{x_i}b_j=0$ on $\Gamma_i$ from Corollary \eqref{Lem25} for $j\neq i$ and $\phi_j=0$ on $\Gamma_i$ if $j=i$. Then we have 
	\begin{align}\label{3.27}
		\|\na_x\phi_j\|_{L^2_x}\le \|\partial_{x_i}b_j\|_{L^2_x}. 
	\end{align}
	Similarly, since $\pa_t$ doesn't affect the boundary values, we have  
	\begin{align}\label{3.28}
		\|\partial_t\na_x\phi_j\|_{L^2_x}\lesssim\|\partial_t\partial_{x_i}b_j\|_{L^2_x}. 
	\end{align}
	Now we let $|\alpha|\le 2$. 
	For $S_1$, we have from \eqref{3.24a}, \eqref{3.21a}, \eqref{3.21b}, \eqref{3.28} and \eqref{11}$_2$ that 
	\begin{align}\label{4.31}\notag
		\dis|S_1| &\le  \big(\P \partial^\alpha f,\partial_t\Phi_b\big)_{L^2_{x,v}}+\big(\{\I-\P\} \partial^\alpha f,\partial_t\Phi_b\big)_{L^2_{x,v}}\\[1mm]
		&\notag\lesssim C_\eta\|\partial^\alpha c\|_{L^2_x}^2 + C_\eta\|\{\I-\P\} \partial^\alpha f\|_{L^2_x}^2 + \eta\|\partial_t\na_x b_j\|_{L^2_x}^2\\[1mm]
		&\lesssim  \sum_{|\alpha|\le 2}\|\partial^\alpha c\|_{L^2_x} +C_\eta\sum_{|\alpha|\le 2}\|\partial^\alpha \{\I-\P\}f\|_{L^2_x} +\eta\sum_{1\le|\alpha|\le2}\|\partial^\alpha a\|_{L^2_x}.
	\end{align}
For $S_3$, by \eqref{3.24}, \eqref{3.21}, \eqref{3.21c} and \eqref{3.27}, we have 
\begin{align}\label{3.22a}\notag
	|S_3|
	&\lesssim\notag C_\eta\|\partial^\alpha \{\I-\P\}f\|^2_{L^2_xL^2_D} +C_\eta\|(\partial^\alpha g,\zeta)_{L^2_v}\|^2_{L^2_x}+ \eta\|\nabla_x\phi_j\|_{L^2_x}\\
	&\lesssim C_\eta\sum_{|\alpha|\le 2}\|\{\I-\P\}\partial^\alpha f\|_{L^2_xL^2_D}^2 + C_\eta\sum_{|\alpha|\le 2}\|(\pa^\alpha g,\zeta(v))_{L^2_v}\|_{L^2_x}^2 + \eta\sum_{|\alpha|\le1}\|\partial^\alpha b\|_{L^2_x}.
\end{align}
For the second term on left hand side of \eqref{100}, we have  
		\begin{align}\label{3.30}\notag
			&\quad\,-\sum^3_{m=1}(\P {\partial^\alpha f},v\cdot \na_x{\Phi^{j,m}_b})_{L^2_{x,v}}\\
			&\notag=-\sum^3_{m=1}\big(({\partial^\alpha a}+{\partial^\alpha b}\cdot v+\frac{1}{2}(|v|^2-3){\partial^\alpha c})\mu^{1/2},v\cdot{\nabla_x\Phi^{j,m}_b}\big)_{L^2_{x,v}}\\
			\notag&=-\sum^{3}_{m=1,m\neq j}(v_mv_j\mu^{1/2}{\partial^\alpha b_j},|v|^2v_mv_j\mu^{1/2}{\partial_{x_m}^2\phi_j})_{L^2_{x,v}}\\
			&\notag\qquad-\sum^{3}_{m=1,m\neq j}(v_mv_j\mu^{1/2}{\partial^\alpha b_m},|v|^2v_mv_j\mu^{1/2}{\partial_{x_m}\partial_{x_j}\phi_j})_{L^2_{x,v}}\\
			&\notag\qquad+7\sum^{3}_{m=1,m\neq j}({\partial^\alpha b_m},{\partial_{x_m}\partial_{x_j}\phi_j})_{L^2_{x}}-7({\partial^\alpha b_m},{\partial^2_{x_j}\phi_j})_{L^2_{x}}\\
			&= -7 \sum^3_{m=1}({\partial^\alpha b_j},{\partial_{x_m}^2\phi_j})_{L^2_{x}}=7\|{\partial^\alpha b_j}\|^2_{L^2_{x}}.
		\end{align}
%
%
%
%
		Now we consider the boundary term $S_4$. As in the estimate on $c(t,x)$, we consider $\Gamma_i$ for fixed $i=1,2,3$: 
		\begin{multline}\label{119}
			\int_{\Gamma_i}(v\cdot n(x){\partial^\alpha f}(x),{\Phi_b}(x))_{L^2_v}\,dS(x)\\
			= \sum_{m=1}^3\int_{\Gamma_i}\int_{\R^3}v\cdot n(x){\partial^\alpha f}(t,x,v){\Phi^{j,m}_b}(x,v)\,dvdS(x).
		\end{multline}
		If $\alpha_i=0$ or $2$, then applying boundary condition \eqref{120a}, we have that for $x\in\Gamma_i$, 
		\begin{align*}
			\partial_{x_i}\phi_j(x) = \partial_{x_j}\phi_i(x) = 0,\quad\text{ for }j\neq i. 
		\end{align*}
	This shows that $\Phi^{j,m}_b(x,v)$ is even with respect to $v_i$ when $x\in\Gamma_i$. 
	Noticing $R_xv = v-2v\cdot e_ie_i$ on $\Gamma_i$, we know that on $\Gamma_i$, 
	\begin{align*}
		{\Phi^{j,m}_b}(x,R_xv) = {\Phi^{j,m}_b}(x,v).
	\end{align*} 		
	Applying change of variable $v\mapsto R_xv$ and using identities \eqref{115q}, \eqref{115c}, \eqref{115b} and \eqref{115d}, \eqref{119} becomes 
		\begin{align*}
			&\quad\, \sum_{m=1}^3\int_{\Gamma_i}\int_{\R^3}v\cdot n(x){\partial^\alpha f}(t,x,v){\Phi^{j,m}_b}(x,v)\,dvdS(x)\\
			&=\sum_{m=1}^3\int_{\Gamma_i}\int_{\R^3}R_xv\cdot n(x){\partial^\alpha f}(t,x,R_xv){\Phi^{j,m}_b}(x,R_xv)\,dvdS(x)\\
			&=\sum_{m=1}^3\int_{\Gamma_i}\int_{\R^3}-v\cdot n(x){\partial^\alpha f}(t,x,v){\Phi^{j,m}_b}(x,v)\,dvdS(x)=0.
		\end{align*}
	If $\alpha_i=1$, then boundary condition \eqref{120a} shows that on $x\in\Gamma_i$, 
	\begin{align*}
		\partial_{x_j}\phi_j(x) &= 0, \quad \text{ for }j=1,2,3,\\
		\partial_{x_m}\phi_j(x) &= 0,\quad \text{ for } j,m\neq i.
	\end{align*}
Then we know that $\Phi^{j,m}_b(x,v)$ is odd with respect to $v_i$ when $x\in\Gamma_i$ and hence, 
\begin{align*}
	{\Phi^{j,m}_b}(x,R_xv) = -{\Phi^{j,m}_b}(x,v).
\end{align*}
Now applying change of variable $v\mapsto R_xv$ and using identities \eqref{115} and \eqref{115b}, \eqref{119} becomes 
\begin{align*}
	&\quad\, \sum_{m=1}^3\int_{\Gamma_i}\int_{\R^3}v\cdot n(x){\partial^\alpha f}(t,x,v){\Phi^{j,m}_b}(x,v)\,dvdS(x)\\
	&=\sum_{m=1}^3\int_{\Gamma_i}\int_{\R^3}R_xv\cdot n(x){\partial^\alpha f}(t,x,R_xv){\Phi^{j,m}_b}(x,R_xv)\,dvdS(x)\\
	&=\sum_{m=1}^3\int_{\Gamma_i}\int_{\R^3}v\cdot n(x){\partial^\alpha f}(t,x,v){(-\Phi^{j,m}_b)}(x,v)\,dvdS(x)=0.
\end{align*}
Therefore, \begin{equation}\label{3.31}
	S_4=0.
\end{equation} Combining estimates \eqref{3.19a}, \eqref{4.31}, \eqref{3.22a}, \eqref{3.30}, \eqref{3.31}, taking summation of \eqref{100} over $|\alpha|\le2$ and letting $\eta$ sufficiently small, we have 
		\begin{multline}\label{122b}
			\partial_t\sum_{|\alpha|\le 2}(\partial^\alpha f,\Phi_b)_{L^2_{x,v}} + \lambda\sum_{|\alpha|\le 2}\|{\partial^\alpha b}\|^2_{L^2_{x}}\lesssim \eta\sum_{1\le|\alpha|\le 2}\|\partial^\alpha a\|^2_{L^2_{x}}+ C_\eta\sum_{|\alpha|\le 2}\|\pa^\alpha{c}\|^2_{L^2_{x}}\\
			+C_\eta\sum_{|\alpha|\le 2}\|\pa^\alpha\{\I-\P\}{f}\|^2_{L^2_{x}L^2_D}  +C_\eta\sum_{|\alpha|\le 2}\|({\partial^\alpha g},\zeta)_{L^2_v}\|^2_{L^2_{x}},
		\end{multline}for some $\lambda>0$.

		\medskip\noindent{\bf Estimate on ${a}(t,x)$:} We choose the following test function
		\begin{align*}
			{\Phi} = {\Phi_{a}} = (|v|^2-10)\big(v\cdot{\nabla_{x}\phi_{a}}(t,x)\big)\mu^{1/2},
		\end{align*}
		where $\phi_a$ solves 
		\begin{equation}\label{122}\left\{\begin{aligned}
				&-\Delta_x \phi_a = {\partial^\alpha a},\\
				&{\phi_a}(x)= 0 \ \text{ on }\ x\in \Gamma_i,\ \text{ if }\alpha_i = 1,\\
				&\frac{\partial\phi_a}{\partial n}(x)= 0\ \text{ on }\ x\in \Gamma_i,\ \text{ if }\alpha_i = 0\text{ or } 2.
			\end{aligned}\right.
\end{equation}
	The existence and uniqueness of solution to \eqref{122} is guaranteed by \cite[Lamma 4.4.3.1]{Grisvard1985}. When $\alpha_i=2$ for some $i$, \eqref{122} is pure Neumann problem and we need $\int_{\Omega}\partial_{x_ix_i}a\,dx=\int_{\Gamma_i}\partial_{x_i}a\,dS(x)=0$ from Corollary \ref{Lem25} to ensure the existence of \eqref{122}. 
		Now we compute \eqref{100}. For the second term on left hand side of \eqref{100}, we have 
		\begin{align*}
			&\quad\,-({\partial^\alpha \P f},v\cdot{\nabla_{x}\Phi_{a}})_{L^2_{x,v}} \\
			&= -\sum_{j,m=1}^3({\partial^\alpha a}+{\partial^\alpha b}\cdot v+\frac{1}{2}(|v|^2-3){\partial^\alpha c} ,v_jv_m(|v|^2-10)\mu{\partial_j\partial_m\phi_{a}})_{L^2_{x,v}} \\
			&= \sum_{j=1}^3({\partial^\alpha a} ,{-\partial^2_j\phi_{a}})_{L^2_{x}}  = \|{\partial^\alpha a}\|^2_{L^2_{x}} .
		\end{align*}
		Since $\Phi_a$ and $\Phi_c$ has similar structure, the estimates for $S_j$$(1\le j\le 4)$ are similar to the case of $c(t,x)$ from \eqref{3.8} to \eqref{122a}. In fact, similar to the calculation from \eqref{3.8} to \eqref{3.13}, we have that for $|\alpha|\le 2$, 
		\begin{equation}\label{3.32}
			\sum_{i,j=1}^3\|\partial_{x_ix_j}\phi_a\|^2_{L^2_x}=\|\Delta_x\phi_a\|^2_{L^2_x}\lesssim \|\partial^\alpha a\|^2_{L^2_x},
		\end{equation}
	\begin{equation}\label{3.33}
		\|\nabla_x\phi_a\|_{L^2_x}\lesssim \sum_{|\alpha|\le1}\|\pa^\alpha a\|_{L^2_x},
	\end{equation}
and 
\begin{equation}\label{3.34}
	\|\pa_t\nabla_x\phi_a\|_{L^2_x}\lesssim \sum_{|\alpha|\le1}\|\pa_t\pa^\alpha a\|_{L^2_x}\lesssim \sum_{1\le|\alpha|\le2}\|\pa^\alpha b\|_{L^2_x}.
\end{equation}
where the last inequality follows from \eqref{11}$_1$. 
	Then for $S_1$, we apply \eqref{3.34} to obtain 
	\begin{align*}
		|S_1|&\le \big|\big(\{\I-\P\}\partial^\alpha f, \pa_t\Phi_a\big)_{L^2_{x,v}}\big|
		+\big|\big(\P\partial^\alpha f, \pa_t\Phi_a\big)_{L^2_{x,v}}\big|\\
		&\lesssim \|\{\I-\P\}\partial^\alpha f\|^2_{L^2_xL^2_D} + \|\partial^\alpha b\|_{L^2_x}^2 + \|\pa_t\na_x\phi_a\|^2_{L^2_x}\\
		&\lesssim \|\{\I-\P\}\partial^\alpha f\|^2_{L^2_xL^2_D} + \sum_{|\alpha|\le 2}\|\partial^\alpha b\|_{L^2_x}^2. 
	\end{align*}	
For $S_2$, by \eqref{3.32}, we have 
\begin{align*}
	|S_2|\lesssim C_\eta\|\{\I-\P\}\partial^\alpha f\|^2_{L^2_xL^2_D} + \eta\|\partial^\alpha a\|^2_{L^2_x}. 
\end{align*}
For $S_3$, by \eqref{3.33}, we have 
\begin{align*}
	|S_3|\lesssim C_\eta \|\{\I-\P\}\partial^\alpha f\|^2_{L^2_xL^2_D} + C_\eta \|(\partial^\alpha g,\zeta)_{L^2_v}\|^2_{L^2_x} + \eta\sum_{|\alpha|\le1} \|\pa^\alpha a\|_{L^2_x}^2. 
\end{align*}
	For $S_4$, we will apply a similar argument as in the estimate of $c(t,x)$ to calculate the boundary value. As before, we decompose $\partial\Omega=\cap^{3}_{i=1}\Gamma_i$ and calculate the value on each boundary separately:
	\begin{align}\label{123}
		\int_{\Gamma_i}(v\cdot n(x){\partial^\alpha f}(x),{\Phi_a}(x))_{L^2_v}\,dS(x).
	\end{align}
If $\alpha_i=0$ or $2$, using boundary value \eqref{122}, we know that $\partial_{x_i}\phi_a(x)=0$ on $x\in\Gamma_i$ and hence, $\Phi_a(t,x,v)$ is even with respect to $v_i$. Also, $R_x$ maps $v$ to $v-2v\cdot e_je_j$ on $\Gamma_i$. Thus,  on $\Gamma_i$, we have 
\begin{align*}
	\Phi_a(t,x,R_xv) = \Phi_a(t,x,v).
\end{align*} 
Therefore, applying change of variable $v\mapsto R_xv$, \eqref{123} becomes 
\begin{align*}
	&\quad\,\int_{\Gamma_i}(v\cdot n(x){\partial^\alpha f}(x),{\Phi_a}(x))_{L^2_v}\,dS(x)\\
	&=\int_{\Gamma_i}\int_{\R^3}R_xv\cdot n(x){\partial^\alpha f}(x,R_xv){\Phi_a}(x,R_xv))\,dvdS(x)\\
	&= \int_{\Gamma_i}\int_{\R^3}-v\cdot n(x){\partial^\alpha f}(x,v){\Phi_a}(x,v)\,dvdS(x)=0, 
\end{align*}
where we also used \eqref{115q}, \eqref{115c}, \eqref{115b} and \eqref{115d}. 
If $\alpha_i=1$, by \eqref{122}, one has on $\Gamma_i$ that 
\begin{align*}
	\Phi_a(t,x,R_xv) = -\Phi_a(t,x,v).
\end{align*} 
Therefore, applying change of variable $v\mapsto R_xv$ and using \eqref{115} and \eqref{115b}, \eqref{123} becomes 
\begin{align*}
	&\quad\,\int_{\Gamma_i}(v\cdot n(x){\partial^\alpha f}(x),{\Phi_a}(x))_{L^2_v}\,dS(x)\\
	&=\int_{\Gamma_i}\int_{\R^3}R_xv\cdot n(x){\partial^\alpha f}(x,R_xv){\Phi_a}(x,R_xv))\,dvdS(x)\\
	&= \int_{\Gamma_i}\int_{\R^3}v\cdot n(x){\partial^\alpha f}(x,v)({-\Phi_a}(x,v))\,dvdS(x)=0. 
\end{align*}
In any cases, we have $S_4=0$. 
Combining the above estimates, taking summation over $|\alpha|\le2$ of \eqref{100} and letting $\eta>0$ small enough, we have 
\begin{multline}\label{122d}
\partial_t\sum_{|\alpha|\le2}(\partial^\alpha f,\Phi_a)_{L^2_{x,v}} + \lambda\sum_{|\alpha|\le2}\|{\partial^\alpha a}\|^2_{L^2_{x}}\\
\lesssim\sum_{|\alpha|\le 2} \|\partial^\alpha\{\I-\P\}{f}\|^2_{L^2_{x}L^2_D}+\sum_{|\alpha|\le 2}\|{\partial^\alpha b}\|^2_{L^2_{x}} +\sum_{|\alpha|\le 2}\|({\partial^\alpha g},\zeta)_{L^2_v}\|^2_{L^2_{x}}.
\end{multline}

Now we take the linear combination $\eqref{122a}+\kappa\times\eqref{122b}+\kappa^2\times\eqref{122d}$, summation on $1\le|\alpha|\le 2$ and let $\kappa,\eta$ sufficiently small, then 
		\begin{equation*}
			\partial_t\E_{int}(t) + \lambda\sum_{|\alpha|\le 2}\|\pa^\alpha[{a},{b},{c}]\|^2_{L^2_{x}}
			\lesssim \sum_{|\alpha|\le 2}\|({\partial^\alpha g},\zeta)_{L^2_v}\|^2_{L^2_{x}}+\sum_{|\alpha|\le 2}\|\partial^\alpha\{\I-\P\}{ f}\|_{L^2_{x}L^2_D},
		\end{equation*}
	where we used \eqref{2.14} and $\E_{int}(t)$ is given by 
	\begin{align*}
		\E_{int}(t) = \sum\Big((\partial^\alpha f,\Phi_c)_{L^2_{x,v}} + \kappa(\partial^\alpha f,\Phi_b)_{L^2_{x,v}}+\kappa^2(\partial^\alpha f,\Phi_a)_{L^2_{x,v}}\Big),
	\end{align*}where the summation is taken over ${1\le|\alpha|\le 2}$ and we restrict $\alpha_i=2$ for some $i$ when $|\alpha|=2$. 
Using \eqref{3.10}, \eqref{3.12}, \eqref{3.21}, \eqref{3.27} and \eqref{3.33}, we know that 
\begin{align*}
	\E_{int}(t)\lesssim \sum_{|\alpha|\le 2}\|\partial^\alpha f\|_{L^2_xL^2_v}. 
\end{align*}This completes the Theorem \ref{Thm31}.
\end{proof}
		
Now we estimate $\|({\partial^\alpha g},\zeta)_{L^2_v}\|^2_{L^2_{x}}$ when $g=\Gamma(f,f)$. For $1\le|\alpha|\le 2$, by \eqref{gamma}, we have 
		\begin{align}
			\label{135}
			\int_{\Omega}|({\partial^\alpha\Gamma(f,f)},\zeta(v))_{L^2_{v}}|^2\,dx
			&\notag\lesssim \int_{\Omega}\sum_{\alpha_1\le\alpha}|{\partial^{\alpha-\alpha_1} f}|^2_{L^2_v}|{\partial^{\alpha_1} f}|^2_{L^2_{D}}\,dx\\
			&\notag\lesssim \sum_{|\alpha_1|=2}\|{\partial^{\alpha-\alpha_1} f}\|^2_{L^\infty_xL^2_v}\|{\partial^{\alpha_1} f}\|^2_{L^2_xL^2_{D}}\\
			&\notag\qquad+\sum_{|\alpha_1|=1}\|{\partial^{\alpha-\alpha_1} f}\|^2_{L^3_xL^2_v}\|{\partial^{\alpha_1} f}\|^2_{L^6_xL^2_{D}}\\
			&\notag\qquad+\sum_{|\alpha_1|=0}\|{\partial^{\alpha-\alpha_1} f}\|^2_{L^2_xL^2_v}\|{\partial^{\alpha_1} f}\|^2_{L^\infty_xL^2_{D}}\\
			&\lesssim \|{ f}\|^2_{H^2_xL^2_v}\| {f}\|^2_{L^2_xL^2_{D}}\lesssim \E(t)\D(t). 
		\end{align}where we used embedding $\|f\|_{L^3_x(\Omega)}\lesssim\|f\|_{H^1_x(\Omega)}$, $\|f\|_{L^6_x(\Omega)}\lesssim\|\na_xf\|_{L^2_x(\Omega)}$ and $\|f\|_{L^\infty_x(\Omega)}\lesssim\|f\|_{H^2_x(\Omega)}$ from \cite[Section V]{Adams2003}.

\section{Global existence}\label{Sec4}
In this section, we will prove the main Theorem \ref{Main}.

\begin{proof}[Proof of Theorem \ref{Main}]
	Let $|\alpha|\le 2$ and apply $\partial^\alpha$ to \eqref{1}, we have 
	\begin{align}\label{24}
	\partial_t{\partial^\alpha f}+v\cdot\nabla_x{\partial^\alpha f} - L {\partial^\alpha f} = \partial^\alpha\Gamma(f,f).
	\end{align}
	Taking inner product of \eqref{24} with $\partial^\alpha f$ over $\Omega\times\R^3$, we have 
	\begin{multline}\label{25}
		\frac{1}{2}\partial_t\|\partial^\alpha f\|^2_{L^2_{x,v}} + \frac{1}{2}\int_{\partial\Omega}\int_{\R^3}v\cdot n(x)|\partial^\alpha f(x,v)|^2\,dvdS(x) + \lambda\|\{\I-\P\}\partial^\alpha f\|_{L^2_{x}L^2_D}^2\\ \lesssim \|f\|_{H^2_xL^2_v}\|f\|_{H^2_xL^2_D}\|\partial^\alpha\{\I-\P\} f\|_{L^2_xL^2_D},
	\end{multline}where we used \eqref{L} and \eqref{gammax} and $dS(x)$ is the spherical measure.
By \eqref{115q}, \eqref{115c}, \eqref{115} and \eqref{115d}, we know that on interior of each $\Gamma_i(1\le i\le 3)$, 
\begin{align*}
|\partial^\alpha f(x,R_xv)|^2 = |\partial^\alpha f(x,v)|^2,\quad\text{ on $v\cdot n(x)\neq 0$}. 
\end{align*}
Then by changing of variable $v\mapsto R_xv$, we have 
\begin{align*}
	&\quad\,\int_{\Omega}\int_{\R^3}v\cdot n(x)|\partial^\alpha f(x,v)|^2\,dvdS(x) \\
	&= \int_{\Omega}\int_{\R^3}R_xv\cdot n(x)|\partial^\alpha f(x,R_xv)|^2\,dvdS(x)\\
	&= \int_{\Omega}\int_{\R^3}-v\cdot n(x)|\partial^\alpha f(x,v)|^2\,dvdS(x) = 0. 
\end{align*}
Therefore, taking summation on $|\alpha|\le 2$ of \eqref{25}, we have 
\begin{equation}\label{9}
	\frac{1}{2}\partial_t\|f\|^2_{H^2_xL^2_v} + \lambda\|\{\I-\P\}f\|_{H^2_xL^2_D}^2 \lesssim \|f\|_{H^2_xL^2_v}\|f\|_{H^2_xL^2_D}\|\{\I-\P\} f\|_{H^2_xL^2_D}. 
\end{equation}
Now we take linear combination $\eqref{9}+\kappa\times\eqref{121}$ with $\kappa>0$ small enough and apply \eqref{135}, then 
\begin{align}\label{9b}
	\partial_t\E(t) + \lambda \D(t) \lesssim (\sqrt{\E(t)}+\E(t))\D(t), 
\end{align}
 where $\E(t)$ is given by 
 \begin{align*}
 	\E(t) := \frac{1}{2}\|f(t)\|^2_{H^2_xL^2_v} + \kappa\E_{int}(t),
 \end{align*}
and $\D(t)$ is defined by \eqref{defd}. It's direct to check that $\E(t)$ satisfies \eqref{defe} with $\kappa>0$ sufficiently small. 
With the main estimate \eqref{9b} in hand, it is now standard to apply the continuity argument and local existence from Section \ref{Sec5} to prove the global-in-time existence and uniqueness of \eqref{1} and \eqref{specular}, under the smallness of \eqref{small}.


For large-time decay, when $\gamma+2s\ge 0$, we have $\E(t)\lesssim \D(t)$. Then under the smallness \eqref{small} of $\E(0)$, it's standard to apply the {\em a priori} estimate argument to obtain 
\begin{equation*}
	\partial_t\E(t) + \delta \E(t) \le 0,
\end{equation*}
and 
	\begin{equation*}
	\E(t) \le e^{-\delta t}\|f_0\|^2_{H^2_xL^2_v},
\end{equation*}for some generic constant $\delta>0$. 
This gives the large-time behavior for {\em hard} potential. 

For {\em soft} potential, we will make use of additional velocity weight and calculate the weighted estimates first. 
Taking inner product of \eqref{24} with $w^2\partial^\alpha f$ over $\Omega\times\R^3$ and summation over $|\alpha|\le2$, we have from Lemma \ref{L2} and \ref{gam} that 
\begin{align}\label{9a}\notag
	&\quad\,\frac{1}{2}\partial_t\sum_{|\alpha|\le 2}\|w\pa^\alpha f\|^2_{L^2_xL^2_v} + \lambda\sum_{|\alpha|\le 2}\|w\partial^\alpha f\|_{L^2_xL^2_D}^2 \\
	&\notag\lesssim \sum_{|\alpha|\le 2}\|\pa^\alpha f\|_{L^2_xL^2_{B_C}}^2 + \|wf\|_{H^2_xL^2_v}\|f\|_{H^2_xL^2_{D,w}}\| f\|_{H^2_xL^2_{D,w}}\\
	&\lesssim \D(t) + \sqrt{\E_w(t)}\D_w(t),
\end{align}for some $\lambda>0$. 
Notice that $w(v)=w(R_xv)$ on $x\in\partial\Omega$. So, $w$ doesn't affect the vanishing boundary term. 
Taking linear combination $\eqref{9b}+\kappa\times\eqref{9a}$ 
with $\kappa>0$ small enough, we have 
\begin{align*}
	\partial_t\E_w(t) + \lambda \D_w(t) \lesssim (\sqrt{\E(t)}+\E(t))\D(t) + \sqrt{\E_w(t)}\D_w(t), 
\end{align*}
where $\E_w$ is given by 
\begin{align*}
	\E_w(t) := \frac{1}{2}\|f(t)\|^2_{H^2_xL^2_v} + \kappa\E_{int}(t) + \kappa\frac{1}{2}\sum_{|\alpha|\le 2}\|w\pa^\alpha f\|^2_{L^2_xL^2_v},
\end{align*}
and $\D_w$ is defined by \eqref{defdw}. It's direct to check that $\E_w$ satisfies \eqref{defew}.
Since $\E(t)\lesssim \E_w(t)$ and $\D(t)\lesssim \D_w(t)$, under the smallness assumption \eqref{small2}, we can obtain the closed estimate:
\begin{align*}
	\partial_t\E_w(t) + \lambda \D_w(t) \le 0. 
\end{align*}
Taking integration on $t\in[0,T]$ for any $T\in(0,\infty]$, we have 
\begin{align}\label{28}
	\sup_{0\le t\le T}\|wf\|^2_{H^2_xL^2_v} + \int^T_0\|w\partial^\alpha f\|_{H^2_xL^2_D}^2\,dt\le \|wf_0\|^2_{H^2_xL^2_v}.
\end{align}

Now we are ready to prove the large-time behavior for {\em soft} potential $\gamma+2s<0$. Let 
\begin{align*}
	h = e^{\delta t^p}f,
\end{align*}
with $\delta>0$, $0<p<1$ chosen later. Since $f$ solves \eqref{24}, we know that $h$ solves 
\begin{align*}
	\partial_t{\partial^\alpha h}+v\cdot\nabla_x{\partial^\alpha h} - L {\partial^\alpha h} = e^{-\delta t}\partial^\alpha\Gamma(h,h) + \delta pt^{p-1}\partial^\alpha h,\quad h|_{t=0} = f_0. 
\end{align*}
Taking inner product with $h$, following the same argument for deriving \eqref{9}, we have 
\begin{align}\label{21}
	\frac{1}{2}\partial_t\|h\|^2_{H^2_xL^2_v} + \lambda\|\{\I-\P\}h\|_{H^2_xL^2_D}^2 \lesssim \|h\|_{H^2_xL^2_v}\|h\|_{H^2_xL^2_D}\|h\|_{H^2_xL^2_D} + \delta pt^{p-1}\|h\|^2_{H^2_xL^2_v}. 
\end{align}
Following the same argument for deriving \eqref{121} and using \eqref{135}, there exists $\E_{int,2}$ satisfying 
\begin{align}\label{4.9}
	|\E_{int,2}|\lesssim \|h\|^2_{H^2_xL^2_v},
\end{align} such that 
\begin{equation}\label{22}
	\partial_t\E_{int,2}(t) + \lambda\|e^{\delta t}[{a},{b},{c}]\|^2_{H^2_{x}}
	\lesssim \|h\|^2_{H^2_xL^2_v}\|h\|_{H^2_xL^2_D}^2+\|\{\I-\P\}{h}\|^2_{H^2_{x}L^2_D}+ \delta pt^{p-1}\|h\|^2_{H^2_xL^2_v}.
\end{equation}
Taking linear combination $\eqref{21}+\kappa\times \eqref{22}$ with $\kappa>0$ small enough, we have 
\begin{align*}
	\partial_t\E_2(t) + \lambda\|h\|_{H^2_xL^2_D}^2 \lesssim \|h\|_{H^2_xL^2_v}\|h\|^2_{H^2_xL^2_D}+\|h\|^2_{H^2_xL^2_v}\|h\|_{H^2_xL^2_D}^2 + \delta pt^{p-1}\|h\|^2_{H^2_xL^2_v},
\end{align*}
where $\E_2(t)$ is given by 
\begin{align*}
	\E_2(t) = \frac{1}{2}\|h\|^2_{H^2_xL^2_v}+\kappa\E_{int,2}(t).
\end{align*}
From \eqref{4.9}, we know that $\E_2(t)\approx \|h\|^2_{H^2_xL^2_x}$. 
Under the smallness of $\|h|_{t=0}\|^2_{H^2_xL^2_v} = \|f_0\|^2_{H^2_xL^2_v}$, it's standard to apply the {\em a priori} estimate argument to obtain
\begin{equation*}
	\partial_t\E_2(t) + \lambda\|h\|_{H^2_xL^2_D}^2 \lesssim \delta pt^{p-1}\|h\|^2_{H^2_xL^2_v}.
\end{equation*}
Taking integration over $t\in[0,T]$ for any $T\in(0,\infty]$, we have 
\begin{equation}\label{26}
	\sup_{0\le t\le T}\|h\|^2_{H^2_xL^2_v} + \lambda\int^T_0\|h\|_{H^2_xL^2_D}^2\,dt \lesssim\|f_0\|^2_{H^2_xL^2_v}+ \delta p\int^T_0t^{p-1}\|h\|^2_{H^2_xL^2_v}\,dt. 
\end{equation}
As in \cite{Strain2012, Duan2020}, for $p'>0$ to be chosen depend on $p$, we define 
\begin{equation*}
	\mathbf{E} = \{\<v\>\le  t^{p'}\}, \quad 	\mathbf{E}^c = \{\<v\>>  t^{p'}\},
\end{equation*}
and make decomposition $\1=\1_{\mathbf{E}}+\1_{\mathbf{E}^c}$. Then the second right-hand term of \eqref{26} can be bounded by 
\begin{equation*}
	\delta p\int^T_0\int_{\R^3}t^{p-1}\1_{\mathbf{E}}\|h\|^2_{H^2_x}\,dvdt + \delta p\int^T_0\int_{\R^3}t^{p-1}\1_{\mathbf{E}^c}\|h\|^2_{H^2_x}\,dvdt =:I_1+I_2. 
\end{equation*}
We define $p' = \frac{p-1}{\gamma+2s}$ for Boltzmann case and $p'=\frac{p-1}{\gamma+2}$ for Landau case. Then on $\mathbf{E}$, we have 
\begin{align*}
	t^{p-1} \le \<v\>^{\frac{p-1}{p'}}, 
\end{align*}and hence, 
\begin{equation}\label{37}
	I_1 \le \delta p\int^T_0\|h\|^2_{H^2_xL^2_D}\,dt.
\end{equation}
On the other hand, on $\mathbf{E}^c$, we have 
\begin{align*}
	w^{-2} \le e^{-\frac{q\<v\>^\vt}{2}} \le e^{-\frac{q t^{p'\vt}}{2}}. 
\end{align*}
Choosing $p = p'\vt$, i.e. $p$ satisfies \eqref{p}, and $2\delta<q/2$, we have 
\begin{align}\label{38}\notag
	I_2 &\le \delta p\int^T_0\int_{\R^3}t^{p-1}e^{-\frac{q t^{p'\vt}}{2}}\|e^{\delta t^p}wf\|^2_{H^2_x}\,dvdt\\
	&\notag\le \delta p\int^T_0t^{p-1}e^{-\frac{q t^{p'\vt}}{2}}e^{2\delta t^p}\,dt\ \sup_{0\le t\le T}\|wf\|^2_{H^2_xL^2_v}\\
	&\le C\delta p\sup_{0\le t\le T}\|wf\|^2_{H^2_xL^2_v}. 
\end{align}
Substituting \eqref{37} and \eqref{38} into \eqref{26}, we have 
\begin{align*}
	\sup_{0\le t\le T}\|h\|^2_{H^2_xL^2_v} + \lambda\int^T_0\|h\|_{H^2_xL^2_D}^2\,dt \lesssim\|f_0\|^2_{H^2_xL^2_v}+ \delta p\int^T_0\|h\|^2_{H^2_xL^2_D}\,dt + C\delta p\sup_{0\le t\le T}\|wf\|^2_{H^2_xL^2_v}. 
\end{align*}
Then applying \eqref{28} to control the last term and letting $\delta>0$ small enough, we have 
\begin{equation*}
	\sup_{0\le t\le T}\|h\|^2_{H^2_xL^2_v} + \lambda\int^T_0\|h\|_{H^2_xL^2_D}^2\,dt\lesssim\|wf_0\|^2_{H^2_xL^2_v}. 
\end{equation*}
This implies the time-decay estimate \eqref{timedecay} for soft potential case. 
Then we complete the proof of Theorem \ref{Main}. 
\end{proof}

\section{Local existence}\label{Sec5}

This section is devoted to the local existence for equation \eqref{1} with specular boundary condition \eqref{specular}. With the {\em a priori} estimate in Section \ref{Sec4} and local-in-time existence, we are able to close the proof of global-in-time existence. 
\begin{Thm}\label{localsolution}
	Assume $\gamma\ge -3$ for Landau case and $\gamma>\max\{-2s-\frac{3}{2},-3\}$ for Boltzmann case. 
	Then there exists $\ve_0>0$, $T_0>0$ such that if $F_0(x,v)=\mu+\mu^{1/2}f_0(x,v)\ge 0$ and 
	\begin{align*}
		\|{wf_0}\|_{H^2_xL^2_v}\le\ve_0,
	\end{align*}
	then the specular reflection boundary problem \eqref{1} and \eqref{specular} admits a unique solution $f(t,x,v)$ on $t\in[0,T_0]$, $x\in\Omega$, $v\in\R^3$, satisfying the uniform estimate 
	\begin{align}
		\label{169}
		\sup_{0\le t\le T_0}\|wf\|_{H^2_xL^2_v}+\int^{T_0}_0\|wf\|_{H^2_xL^2_D}^2\,dt\lesssim\|wf_0\|^2_{H^2_xL^2_v}.
	\end{align}
\end{Thm}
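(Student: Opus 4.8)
The plan is to construct the solution by a linear iteration built on the splitting $L=-A+K$ of Lemma~\ref{L2}, for which the left-hand operator $\partial_t+v\cdot\nabla_x+A$ is fully coercive, and then to extract a limit using the weighted energy machinery already developed in Sections~\ref{Sec3} and~\ref{Sec4}. Concretely, I would initialize $f^0\equiv f_0$ and, given $f^n$, define $f^{n+1}$ as the solution of the \emph{linear} problem
\begin{align*}
\partial_t f^{n+1}+v\cdot\nabla_x f^{n+1}+A f^{n+1}=K f^n+\Gamma(f^n,f^n),\qquad f^{n+1}|_{t=0}=f_0,
\end{align*}
subject to the specular condition \eqref{specular}. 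Each iterate thus solves a genuinely linear transport--collision equation whose source depends only on the previous iterate and whose principal operator is coercive in the weighted dissipation norm by \eqref{estiAK1}.

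First I would establish solvability of each linear step, since the energy identities alone only furnish a priori control. For the linear transport--collision equation with specular reflection I would use an approximation argument: regularize the problem (e.g.\ by inserting a small artificial viscosity $-\epsilon\Delta_x$, or by penalizing the grazing set $\gamma_0$), solve the regularized problem by a standard Galerkin or semigroup construction, and then pass $\epsilon\to 0$ using the $\epsilon$-uniform bounds. The specular condition is compatible with these bounds precisely because, on the union of cubes, Lemma~\ref{Lem24} forces the derivative boundary contributions generated by $v\cdot\nabla_x$ to vanish after the change of variables $v\mapsto R_xv$; this is where the cube geometry enters.

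Next I would run the iteration estimate. Testing the equation for $f^{n+1}$ against $w^2\partial^\alpha f^{n+1}$ for $|\alpha|\le 2$ and invoking the weighted coercivity \eqref{estiAK1}, which controls \emph{all} of $f^{n+1}$ in $L^2_{D,w}$ rather than only its microscopic part, the linear step requires no macroscopic machinery: the term $Kf^n$ is handled by the $L^2_v$-boundedness of $K$ and $\Gamma(f^n,f^n)$ by the trilinear estimate \eqref{gammax}. Using $w(R_xv)=w(v)$ on $\partial\Omega$ together with Lemma~\ref{Lem24} to kill the boundary terms, one arrives at a closed inequality of the form $\partial_t\|wf^{n+1}\|^2_{H^2_xL^2_v}+\lambda\|f^{n+1}\|^2_{H^2_xL^2_{D,w}}\lesssim \|f^n\|^2_{H^2_xL^2_{D,w}}+\big(1+\|wf^n\|_{H^2_xL^2_v}\big)^2\|f^n\|^2_{H^2_xL^2_{D,w}}$. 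A Gronwall argument on a short interval $[0,T_0]$ then propagates the bound $\sup_{[0,T_0]}\|wf^n\|_{H^2_xL^2_v}\le 2\ve_0$ from $n$ to $n+1$, giving \eqref{169} uniformly in $n$.

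The contraction step carries the main technical weight. Writing $d^{n+1}:=f^{n+1}-f^n$, it solves the same linear problem with zero initial data and source $K d^n+\Gamma(d^n,f^n)+\Gamma(f^{n-1},d^n)$. Applying the identical weighted estimate to $d^{n+1}$, using \eqref{gammax} and the uniform bound on the iterates, the nonlinear source is controlled by $C(\ve_0)\,\|d^n\|_{H^2_xL^2_{D,w}}$; after a Cauchy--Schwarz splitting and integration in time one obtains $\int_0^{T_0}\|d^{n+1}\|^2_{H^2_xL^2_{D,w}}\,dt\le C(\ve_0)^2\int_0^{T_0}\|d^{n}\|^2_{H^2_xL^2_{D,w}}\,dt$, which is a genuine contraction once $\ve_0$ is small. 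Hence $\{f^n\}$ is Cauchy; its limit $f$ is the desired solution, the uniform bound passes to the limit to give \eqref{169}, and the same difference estimate applied to two solutions yields uniqueness. The expected principal obstacle is the rigorous construction of the linear iterate: closing the $H^2_x$ estimate for the regularized equation while keeping the specular boundary terms under control and verifying that the compatibility identities of Lemma~\ref{Lem24} survive the regularization limit. Finally, the nonnegativity $F=\mu+\mu^{1/2}f\ge 0$ would be recovered separately from the mild (along-characteristics) formulation, which propagates nonnegativity of $F_0$ under specular reflection.
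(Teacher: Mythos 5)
Your overall architecture matches the paper's: iterate on the splitting $L=-A+K$ from Lemma~\ref{L2}, solve each linear step by regularization, kill the boundary terms via Lemma~\ref{Lem24} and $w(R_xv)=w(v)$, and close by a uniform bound plus a contraction. The differences are in the details. First, the paper's iteration is semi-implicit, $\partial_t f^{n+1}+v\cdot\nabla_x f^{n+1}-Af^{n+1}=\Gamma(f^{n},f^{n+1})+Kf^{n}$, so that the quadratic term is absorbed directly into the dissipation of $f^{n+1}$ via \eqref{gammax} under the smallness of $f^n$; your fully explicit source $\Gamma(f^n,f^n)$ also works, at the price of carrying $\int_0^{T_0}\|f^n\|^2_{H^2_xL^2_{D,w}}\,dt$ through the induction, which is available from the previous step. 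Second, the paper's regularization is not $-\epsilon\Delta_x$ but a weighted elliptic regularization $\ve\sum_{|\alpha|+|\beta|\le 2}(\<v\>^4\partial^\alpha_\beta f,\partial^\alpha_\beta g)$ in \emph{both} $x$ and $v$; this is what makes the bilinear form coercive on a genuine Hilbert space $\mathcal{H}$ into which the specular condition is built, and the $H^2_x$ regularity is then obtained by solving separate problems for each $\partial^\alpha f$ in spaces $\mathcal{H}_\alpha$ that encode the sign-twisted reflection conditions \eqref{115c}--\eqref{115d}. A purely spatial viscosity would not by itself give a bounded coercive form for the degenerate (and, in the Boltzmann case, nonlocal) operator $A$, so the obstacle you flag is real and is resolved in the paper precisely by regularizing in $v$ with the weight $\<v\>^4$.

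One point in your contraction step needs correcting. You claim a contraction purely in $\int_0^{T_0}\|d^{n}\|^2_{H^2_xL^2_{D,w}}\,dt$. This cannot close: the difference source contains $Kd^n$ and the term $\|wd^n\|_{H^2_xL^2_v}\|f^n\|_{H^2_xL^2_{D,w}}$ coming from $\Gamma(d^n,f^n)$, and for soft potentials ($\gamma+2s<0$, resp.\ $\gamma<-2$) the norm $\|wd^n\|_{H^2_xL^2_v}$ is \emph{not} dominated by $\|d^n\|_{H^2_xL^2_{D,w}}$, since $|\cdot|_{L^2_D}$ only controls $|\<v\>^{(\gamma+2s)/2}\cdot|_{L^2_v}$. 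The contraction must therefore be run in the combined norm $\|\cdot\|_{L^\infty_{T_0}H^2_xL^2_v}+\|\cdot\|_{L^2_{T_0}H^2_xL^2_{D}}$, exactly as the paper does, using smallness of both $\ve_0$ and $T_0$ (the $Kd^n$ contribution produces the factor $T_0^{1/2}$). Since the same energy inequality controls the $L^\infty_t$ norm of $d^{n+1}$, this is a repairable bookkeeping issue rather than a fatal gap.
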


We begin with the following linear inhomogeneous problem:
\begin{equation}\label{154}\left\{
	\begin{aligned}
		&\partial_tf + v\cdot \nabla_xf - A f = \Gamma(h,f)+Kh,\\
		&f(0,x,v) = f_0(x,v),\\ 
		&f(t,x,v) = f(t,x,R_xv), \quad \text{ on $\gamma_-$},
	\end{aligned}\right.
\end{equation}
for a given function $h=h(t,x,v)$, where $A$ and $K$ and defined by \eqref{AK1} for Landau case and \eqref{AK} for Boltzmann case. Then we have the following Lemma on existence of linear equation \eqref{154}. 

\begin{Lem}\label{Lem22}
	There exists $\varepsilon_0>0$, $T_0>0$ such if 
	\begin{align*}
		wf_0\in H^2_xL^2_v,\quad wh\in L^\infty_{T_0}L^2_{x,v}\cap L^2_{T_0}H^2_xL^2_{D},
	\end{align*}
	satisfying 
	\begin{equation*}
		h(t,x,R_xv)=h(t,x,v),\ \partial_nh(t,x,R_xv)=-\partial_nh(t,x,v)\text{ on }v\cdot n(x)\neq 0,
	\end{equation*} and 
	\begin{align}\label{156a}
		\|wf_0\|_{H^2_xL^2_v}+\|wh\|_{L^\infty_{T_0}H^2_xL^2_v}+\|wh\|_{L^2_{T_0}H^2_{x}L^2_{D}}\le \varepsilon_0,
	\end{align}
	then the initial boundary value problem \eqref{154} admits a unique weak solution $f=f(t,x,v)$ on $[0,T_0]\times\Omega\times\R^3$ satisfying
		\begin{equation}\label{411}
		\partial_nf(t,x,R_xv)=-\partial_nf(t,x,v)\text{ on }v\cdot n(x)\neq 0,
	\end{equation}
and 
	\begin{align}\label{158a}
		\|wf\|_{L^\infty_{T_0}H^2_xL^2_v}+\|wf\|_{L^2_{T_0}H^2_{x}L^2_{D}}\le \|wf_0\|_{H^2_xL^2_v}+T_0^{1/2}\|wh\|_{L^\infty_{T_0}H^2_xL^2_v}. 
	\end{align}
\end{Lem}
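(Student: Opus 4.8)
The plan is to first establish the weighted energy estimate \eqref{158a} for a sufficiently regular solution, then construct such a solution by an approximation scheme and extract a limit carrying the uniform bound; uniqueness will follow from the same energy estimate applied to the difference of two solutions, since \eqref{154} is linear in $f$.

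First I would derive the a priori estimate. Applying $\partial^\alpha$ for $|\alpha|\le 2$ to \eqref{154}, taking the $L^2_{x,v}$ inner product against $w^2\partial^\alpha f$, and summing over $\alpha$, the transport term produces the boundary integral $\frac12\int_{\partial\Omega}\int_{\R^3} v\cdot n(x)\,w^2|\partial^\alpha f|^2\,dv\,dS(x)$. Using $w(R_xv)=w(v)$ together with the identities \eqref{115q}, \eqref{115c}, \eqref{115}, \eqref{115d}, which hold here once $Lf+\Gamma(f,f)$ is replaced by $-Af+\Gamma(h,f)+Kh$ and the assumed specular symmetry of $h$ is used, the change of variables $v\mapsto R_xv$ forces this boundary term to vanish exactly as in \eqref{25}. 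The coercivity \eqref{estiAK1} of $A$ supplies the dissipation $\lambda\sum_{|\alpha|\le2}\|w\partial^\alpha f\|_{L^2_xL^2_D}^2$ modulo a lower-order $\|\partial^\alpha f\|_{L^2_xL^2_v}^2$ term; the source $Kh$ is handled by boundedness of $K$ on $L^2_v$, and the $\Gamma(h,f)$ term by the trilinear estimate \eqref{gammax}, whose $h$-factor is small by \eqref{156a} and is therefore absorbed into the dissipation. A Gronwall argument followed by integration in time puts the $Kh$ contribution through Cauchy--Schwarz in the form $T_0^{1/2}\|wh\|_{L^\infty_{T_0}H^2_xL^2_v}$, which after absorbing an $\eta\sup_t\|wf\|^2$ term and taking square roots yields \eqref{158a}.

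Next I would construct the solution. Since \eqref{154} is linear in $f$, I would run the iteration $f^{n+1}$ solving the transport problem with $-Af^{n+1}$ and the specular condition on the left and $\Gamma(h,f^n)+Kh$ on the right, solving each linear step after a parabolic regularization of $A$ (adding a small $\varepsilon$-dissipation in $v$) so that solvability with specular reflection is standard, then letting $\varepsilon\to0$. The uniform bound from the a priori step, valid for every iterate, gives weak-$*$ compactness in $L^\infty_{T_0}H^2_xL^2_v$ and weak compactness in $L^2_{T_0}H^2_xL^2_{D}$, and passing to the limit identifies a weak solution; the energy estimate applied to $f^{n+1}-f^{n}$, using smallness of $h$, shows $\{f^n\}$ is Cauchy in a lower norm, giving convergence and uniqueness. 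The derivative boundary condition \eqref{411} is recovered as in Lemma \ref{Lem24} by writing, on the interior of each $\Gamma_i$, $v\cdot n\,\partial_n f = -v\cdot\tau_1\partial_{\tau_1}f - v\cdot\tau_2\partial_{\tau_2}f - \partial_t f + Af - \Gamma(h,f) - Kh$ and invoking the evenness of the right-hand side under $v\mapsto R_xv$.

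The main obstacle is the existence construction with the specular boundary condition: the operator $A$ is nonlocal and singular in $v$ (a degenerate or fractional diffusion), so the approximation must simultaneously be solvable under specular reflection, preserve the derivative symmetries \eqref{115c}, \eqref{115}, \eqref{115d} so that the boundary terms vanish \emph{uniformly} along the scheme rather than only in the limit, and keep the exponentially growing weight $w$ controlled up to the boundary. Guaranteeing the vanishing of the boundary contributions at the approximate level is the delicate point, and it is exactly where the union-of-cubes geometry, in which $\partial_{x_i}\in\{\pm\partial_n,\pm\partial_{\tau_1},\pm\partial_{\tau_2}\}$, is essential.
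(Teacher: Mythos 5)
Your overall architecture matches the paper's: an $\ve$-regularized approximation of \eqref{154}, uniform weighted energy bounds in which the boundary terms vanish by specular symmetry, weak compactness, passage to the limit, and uniqueness from linearity; the treatment of $Kh$ via Cauchy--Schwarz in time to produce the $T_0^{1/2}\|wh\|_{L^\infty_{T_0}H^2_xL^2_v}$ term, and the recovery of \eqref{411} by using the equation to define $\partial_n f$ on the boundary, are also exactly what the paper does. The one structural difference is that you add an extra iteration layer on $\Gamma(h,f^n)$, whereas the paper keeps $\Gamma(h,f)$ inside the bilinear form $\mathbf{B}[f,g]$ of \eqref{17a} and solves the full linear problem in one stroke by coercivity, absorbing the $\Gamma(h,\cdot)$ contribution using the smallness \eqref{156a}; both routes are viable.

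However, you correctly identify but do not resolve the genuinely delicate step: closing the energy estimate for $\partial^\alpha f$ with $|\alpha|\ge 1$ requires the signed reflection identities \eqref{115c}, \eqref{115}, \eqref{115d} to hold for the \emph{approximate} solutions, and these cannot simply be imported from Lemma \ref{Lem24}, which presupposes an already-existing solution. Declaring the preservation of these symmetries along the scheme to be ``the delicate point'' names the obstacle without supplying the device that overcomes it, and as written your derivative-level a priori estimate is circular: it invokes boundary symmetries that are only known for the limit object you are trying to construct. The paper's resolution is concrete: for each $|\alpha|\in\{1,2\}$ it writes down the equation satisfied by $f^{\alpha}:=\partial^\alpha f$ and solves it as a \emph{separate} variational problem in a Hilbert space $\mathcal{H}_\alpha$ whose definition already imposes $f(t,x,v)=-f(t,x,R_xv)$ on $\Gamma_i$ when $\alpha_i=1$ and $f(t,x,v)=f(t,x,R_xv)$ on $\Gamma_j$ when $\alpha_j=0$ or $2$; the boundary contributions then vanish for every approximate iterate by construction, the uniform bounds \eqref{23a} and \eqref{26a} follow, and only afterwards is $f^{\alpha}$ identified with $\partial^\alpha f$ by comparing the weak formulations \eqref{18a} and \eqref{30}. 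Supplying this mechanism (or an equivalent one) is what your proposal is missing.
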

\begin{proof}
	Let $\eta_v$ and $\eta_x$ be the standard mollifier in $\R^3$ and $\Omega$: $\eta_v,\eta_x\in C^\infty_c$, $0\le \eta_v,\eta_x\le 1$, $\int\zeta_vdv=\int\zeta_xdx=1$. For $\varepsilon>0$, let $\eta^\varepsilon_v(v) = \ve^{-3}\zeta_v{(\ve^{-1}v)}$ and $\eta_x^\ve(x)=\ve^{-3}\zeta_x{(\ve^{-1}x)}$. We mollify the initial data as $f_0^\ve=f_0*\eta^\ve_v*\eta^\ve_x$. Then 
	\begin{align*}
		\|{wf_0^\ve}\|_{H^2_xL^2_v}\le \|wf_0*\eta^\ve_v*\eta^\ve_x\|_{H^2_xL^2_v}\lesssim \|\eta^{\ve}_v\|_{L^1_{v}}\|{\eta_x^\ve}\|_{L^1_{x}}\|{wf_0}\|_{H^2_xL^2_v}\le \|{wf_0}\|_{H^2_xL^2_v}.
	\end{align*}
We first consider the case $q=0$ in \eqref{w2}. 
In order to obtain the solution, we consider the following vanishing problem:
	\begin{multline}\label{17}
	(\partial_tf,g)_{L^2_{x,v}} +\ve\sum_{|\alpha|+|\beta|\le 2}(\<v\>^{4}\partial^\alpha_\beta f,\partial^\alpha_\beta g)_{L^2_{x,v}}+(v\cdot \nabla_xf,g)_{L^2_{x,v}}\\ 
	+(- A f,g)_{L^2_{x,v}} = (\Gamma(h,f),g)_{L^2_{x,v}}+(Kh,g)_{L^2_{x,v}}. 
\end{multline}
Then we denote \eqref{17} by 
\begin{align}\label{17a}
	(\partial_tf,g)_{L^2_{x,v}}+ \mathbf{B}[f,g]=(Kh,g)_{L^2_{x,v}},
\end{align}
 where $\mathbf{B}[f,g]$ is a bilinear operator on $\mathcal{H}\times\mathcal{H}$ with 
\begin{align*}
	\mathcal{H} = \{f\in L^2_{x,v}:\ &\<v\>^{2}\partial^\alpha_\beta f\in L^2_{x,v},\ \forall\,|\alpha|+|\beta|\le 2,\ {f}(t,x,v) = {f}(t,x,R_xv)\ \text{ on $\gamma_-$}\},
\end{align*} 
equipped with norm $\sum_{|\alpha|+|\beta|\le 2}\|\<v\>^2\partial^\alpha_\beta f\|_{L^2_xL^2_v}$. 
Note that for $f\in \mathcal{H}$, 
\begin{align*}
	(v\cdot\na_x f,f)_{L^2_{x,v}} &= \int_{\pa\Omega}\int_{\R^3}v\cdot n(x) |f(v)|^2\,dvdS(x)\\
	&= \int_{\pa\Omega}\int_{\R^3}R_xv\cdot n(x) |f(R_xv)|^2\,dvdS(x)\\
	&= \int_{\pa\Omega}\int_{\R^3}-v\cdot n(x) |f(v)|^2\,dvdS(x) = 0,
\end{align*}
where we used \eqref{154}$_3$ and $R_xv\cdot n(x)=-v\cdot n(x)$. 
Together with Lemma \ref{L2} and \ref{gam}, under the smallness of \eqref{156a}, we have that for $f\in \mathcal{H}$, 
\begin{align*}\notag
	\int^T_0\mathbf{B}[f,f]\,dt&\ge \ve\sum_{|\alpha|+|\beta|\le 2}\int^T_0\|\<v\>^{2}\partial^\alpha_\beta f\|^2_{L^2_{x,v}}\,dt +\int^T_0\|f\|_{L^2_xL^2_D}^2\,dt\\
	&\notag\qquad - \int^T_0\big(\|h\|_{L^\infty_xL^2_x}\|f\|_{L^2_xL^2_D}+\|h\|_{L^\infty_xL^2_D}\|f\|_{L^2_xL^2_v}\big)\|f\|_{L^2_xL^2_D}\,dt\\
	&\notag\ge {\ve}\sum_{|\alpha|+|\beta|\le 2}\int^T_0\|\<v\>^{2}\partial^\alpha_\beta f\|^2_{L^2_{x,v}}\,dt + \frac{1}{2}\int^T_0\|f\|_{L^2_xL^2_D}^2\,dt- \ve_0\sup_{0\le t\le T}\|f\|^2_{L^2_{x,v}},
\end{align*}
by choosing $\ve_0$ small enough, where we used embedding $\|\cdot\|_{L^\infty_x}\lesssim \|\cdot\|_{H^2_x}$. 
Now taking integral of \eqref{17a} over $t\in[0,T]$ with $g=f$ and letting $\ve_0$ small enough, we have 
\begin{align}\label{19}\notag
	\frac{1}{4}\sup_{0\le t\le T}\|f\|^2_{L^2_xL^2_v} + {\ve}\int^T_0\|\<v\>^{2} f\|^2_{H^2_xL^2_v}&\,dt + \frac{1}{2}\int^T_0\|f\|_{L^2_xL^2_D}^2\,dt\\
	\notag &\le \|f_0\|^2_{L^2_xL^2_v} + C\int^{T}_0\|h\|^2_{H^2_xL^2_v}\,dt\\
	&\le\|f_0\|^2_{L^2_xL^2_v}+ CT\|h\|^2_{L^\infty_{T}H^2_xL^2_v}. 
\end{align}
Note that $K$ is bounded on $L^2_v$. 
Then by the standard existence and uniqueness for linear evolution equation; cf. \cite{Evans2010}, there exists $T_0>0$ and unique solution $f^\ve\in\mathcal{H}$ to equation 
\begin{multline}\label{18}
	\int^{T_0}_0(\partial_tf^\ve,g)_{L^2_{x,v}}\,dt+ \ve\sum_{|\alpha|+|\beta|\le 2}\int^{T_0}_0(\<v\>^{4}\partial^\alpha_\beta f^\ve,\partial^\alpha_\beta g)_{L^2_{x,v}}\,dt+\int^{T_0}_0(v\cdot \nabla_xf^\ve,g)_{L^2_{x,v}}\,dt\\ 
	+\int^{T_0}_0(- A f^\ve,g)_{L^2_{x,v}}\,dt = \int^{T_0}_0(\Gamma(h,f^\ve),g)_{L^2_{x,v}}\,dt=\int^{T_0}_0(Kh,g)_{L^2_{x,v}}\,dt,
\end{multline} on $[0,T_0]\times\Omega\times\R^3$, for any test function $g\in \mathcal{H}$.
 Thanks to \eqref{19}, the sequence $\{f^\ve\}$ satisfies 
 \begin{align}\label{19aa}
 	\frac{1}{4}\sup_{0\le t\le T_0}\|f^\ve\|^2_{L^2_xL^2_v} + \frac{1}{2}\int^{T_0}_0\|f^\ve\|_{L^2_xL^2_D}^2\,dt 
 	\le\|f_0\|^2_{L^2_xL^2_v}+ CT_0\|h\|^2_{L^\infty_{T_0}H^2_xL^2_v}. 
 \end{align}
 Therefore, $\{f^\ve\}$ is uniformly bounded in $L^\infty_{T_0}L^2_xL^2_v$ and $L^2_{T_0}L^2_xL^2_D$ and hence has a weak limit $f\in L^\infty_{T_0}L^2_xL^2_v\cap L^2_{T_0}L^2_xL^2_D$. Taking weak limit $\ve\to 0$ in \eqref{18}, we have 
\begin{multline}\label{18a}
	\int^{T_0}_0(\partial_tf,g)_{L^2_{x,v}}\,dt +\int^{T_0}_0(v\cdot \nabla_xf,g)_{L^2_{x,v}}\,dt
	+\int^{T_0}_0(- A f,g)_{L^2_{x,v}}\,dt \\= \int^{T_0}_0(\Gamma(h,f),g)_{L^2_{x,v}}\,dt+\int^{T_0}_0(Kh,g)_{L^2_{x,v}}\,dt,
\end{multline}
with initial data $f(0)=f_0$, for any sufficiently smooth $g$ satisfying ${g}(t,x,v) = {g}(t,x,R_xv)$ on $\gamma_-$.
	
	Next we derive the identities on derivative. Let $|\alpha|=1$. We consider the equation 
	\begin{align*}
		\partial_tf^{\alpha} + v\cdot \nabla_xf^{\alpha}  - A \partial^{\alpha} f = \Gamma(\partial^{\alpha} h,f)+\Gamma(h,f^{\alpha} )+ K\partial^{\alpha} h,
	\end{align*}
	with initial data $f^{\alpha}(0,x,v)=\partial^{\alpha} f_0(x,v)$.
	Since $|\alpha|=1$, we assume $\alpha_i=1$ for some $i=1,2,3$. 
	Then we use Hilbert space 
	\begin{align*}
		\mathcal{H}_{\alpha} = \big\{f\in L^2_{x,v}:\ &\<v\>^{2} f\in H^2_xL^2_v,\ f\in L^2_xL^2_D, \\ &{f}(t,x,v) = -{f}(t,x,R_xv)\ \text{ on $\Gamma_i$ if $\alpha_i=1$},\\
		&{f}(t,x,v) = {f}(t,x,R_xv)\ \text{ on $\Gamma_j$ if $\alpha_j=0$ or $2$}
\big\}.
	\end{align*} Using the same argument we used to derive \eqref{18}, there exists $f^{\ve,\alpha}\in\mathcal{H}_{\alpha}$ such that 
\begin{align*}\notag
	&\quad\,\int^{T_0}_0(\partial_tf^{\ve,\alpha},g)_{L^2_{x,v}}\,dt+ \ve\sum_{|\alpha'|+|\beta|\le 2}\int^{T_0}_0(\<v\>^{4}\partial^{\alpha'}_\beta  f^{\ve,\alpha},\partial^\alpha_\beta  g)_{L^2_{x,v}}\,dt\\
	\notag&+\int^{T_0}_0(v\cdot \nabla_xf^{\ve,\alpha},g)_{L^2_{x,v}}\,dt 
	+\int^{T_0}_0(- A f^{\ve,\alpha},g)_{L^2_{x,v}}\,dt\\ &=\int^{T_0}_0\Gamma(\partial^{\alpha} h,f)\,dt+ \int^{T_0}_0(\Gamma(h,f^{\ve,\alpha}),g)_{L^2_{x,v}}\,dt+\int^{T_0}_0(K\partial^{\alpha}h,g)_{L^2_{x,v}}\,dt,
\end{align*}for $g\in\mathcal{H}_{\alpha}$.
Similar to \eqref{19}, we can obtain the energy estimate for sufficiently small $T_0$:
\begin{multline}\label{23}
	\frac{1}{4}\sup_{0\le t\le T_0}\| f^{\ve,\alpha}\|^2_{L^2_xL^2_v} + {\ve}\int^{T_0}_0\|\<v\>^{2} f^{\ve,\alpha}\|^2_{H^2_xL^2_v}\,dt + \frac{1}{2}\int^{T_0}_0\| f^{\ve,\alpha}\|_{L^2_xL^2_D}^2\,dt 
\\
	\le\|\partial^{\alpha} f_0\|^2_{L^2_xL^2_v} + T_0\|h\|^2_{L^\infty_{T_0}H^2_xL^2_v} \\+ \big(\|\partial^{\alpha} h\|_{L^\infty_{T_0}L^2_xL^2_v}\|f\|_{L^2_{T_0}L^2_xL^2_D}+\|\partial^{\alpha} h\|_{L^2_{T_0}L^2_xL^2_D}\|f\|_{L^\infty_{T_0}L^2_xL^2_v}\big)\|f^{\ve,\alpha}\|_{L^2_{T_0}L^2_xL^2_D}. 
\end{multline}
Choosing $\ve_0>0$ in \eqref{156a} small enough and using \eqref{19aa}, we have 
\begin{align}\label{23a}
\frac{1}{4}\sup_{0\le t\le T_0}\|f^{\ve,\alpha}\|^2_{L^2_xL^2_v} +  \frac{1}{2}\int^{T_0}_0\|f^{\ve,\alpha}\|_{L^2_xL^2_D}^2\,dt 
\lesssim \|f_0\|^2_{H^1_xL^2_v} + T_0\|h\|^2_{L^\infty_{T_0}H^2_xL^2_v}.
\end{align}
Then $\{f^{\ve,\alpha}\}$ is bounded in $L^\infty_{T_0}L^2_xL^2_v\cap L^2_{T_0}L^2_xL^2_D$ uniformly in $\ve>0$. 
Then the weak limit $f^{\alpha}$ of $\{f^{\ve,\alpha}\}$ is the solution to 
\begin{multline}\label{30}
	\int^{T_0}_0(\partial_tf^{\alpha},g)_{L^2_{x,v}}\,dt+\int^{T_0}_0(v\cdot \nabla_xf^{\alpha},g)_{L^2_{x,v}}\,dt\
	+\int^{T_0}_0(- A f^{\alpha},g)_{L^2_{x,v}}\,dt \\=\int^{T_0}_0\Gamma(\partial^{\alpha} h,f)\,dt+ \int^{T_0}_0(\Gamma(h,f^{\alpha}),g)_{L^2_{x,v}}\,dt+\int^{T_0}_0(K\partial^{\alpha} h,g)_{L^2_{x,v}}\,dt.
\end{multline}
 Then $f^{\alpha}=\partial^{\alpha} f$ in the weak sense by using \eqref{18a} and \eqref{30}. Also, one can use \eqref{154}$_1$ to define $\partial_nf$ on the boundary and deduce \eqref{411} as in Lemma \ref{Lem24}. 
 
 For second order derivatives, we let $|\alpha|=2$ and consider equation 
	\begin{align}\label{25a}
	\partial_tf^{\alpha} + v\cdot \nabla_xf^{\alpha}  - A \partial^{\alpha} f = \sum_{|\alpha'|\le 1}\Gamma(\partial^{\alpha-\alpha'} h,\partial^{\alpha'}f)+\Gamma(h,f^{\alpha} )+ K\partial^{\alpha} h,
\end{align}
in space $\mathcal{H}_\alpha$. 
Applying the same argument for deriving \eqref{23} and \eqref{23a}, we can obtain the solution $f^{\alpha}\in\mathcal{H}_\alpha$ to \eqref{25a} with estimate
\begin{align}\label{26a}
		\frac{1}{4}\sup_{0\le t\le T_0}\|f^{\alpha}\|^2_{L^2_xL^2_v} +  \frac{1}{2}\int^{T_0}_0\|f^{\alpha}\|_{L^2_xL^2_D}^2\,dt 
	\le\|f_0\|^2_{H^2_xL^2_v} + T_0\|h\|^2_{L^\infty_{T_0}H^2_xL^2_v}. 
\end{align}
Then $f^{\alpha}=\partial^{\alpha} f$ and $f$ satisfies \eqref{158a}. Combining estimates \eqref{19aa}, \eqref{23a} and \eqref{26a}, we obtain \eqref{158a}. 
 This completes the Lemma when $q=0$.

For the estimate with weight, we let $g=w^2f$ in \eqref{17} and use Lemma \ref{L2} and \ref{gam} to obtain 
\begin{align*}
	&\quad\,\frac{1}{2}\partial_t\|wf\|^2_{L^2_xL^2_v} + {\ve}\sum_{|\alpha|+|\beta|\le 2}\|\<v\>^{2} w\partial^\alpha_\beta f\|^2_{L^2_xL^2_v} + \frac{1}{2}\|wf\|_{L^2_xL^2_D}^2\\
	&\le C\|f\|^2_{L^2_{x,v}} + \big(\|wh\|_{H^2_xL^2_v}\|wf\|_{H^2_xL^2_D}+\|wh\|_{H^2_xL^2_D}\|wf\|_{H^2_xL^2_2}\big)\|wf\|_{H^2_xL^2_D}+C\|wh\|^2_{H^2_xL^2_v}. 
\end{align*}
Taking integration on $t\in[0,T]$ and using \eqref{156a}, we have 
\begin{align*}
	&\quad\,\frac{1}{2}\sup_{0\le t\le T}\|wf\|^2_{L^2_xL^2_v} + {\ve}\sum_{|\alpha|+|\beta|\le 2}\int^T_0\|\<v\>^{2} w\partial^\alpha_\beta f\|^2_{L^2_xL^2_v}\,dt + \frac{1}{2}\int^T_0\|wf\|_{L^2_xL^2_D}^2\,dt\\
	&\le \|wf_0\|^2_{L^2_xL^2_v}+CT\|f\|^2_{L^\infty_TL^2_{x,v}} + CT\|wh\|^2_{L^\infty_TH^2_xL^2_v}. 
\end{align*}
This is an analog estimate as \eqref{19}. The term $CT\|f\|^2_{L^\infty_TL^2_{x,v}}$ can be absorbed by the left hand side if we choose $T>0$ sufficiently small. Then one can follow the same argument from \eqref{18} to \eqref{26a} to obtain the result \eqref{158a} for weighted estimates. 
This completes Lemma \ref{Lem22}. 

	\end{proof}

\begin{proof}[Proof of Theorem \ref{localsolution}]
	We now construct the approximation solution sequence $
		\{f^n(t,x,v)\}^\infty_{n=0}
	$ using the following iterative scheme:
	\begin{equation*}\left\{
		\begin{aligned}
			&\partial_tf^{n+1} + v\cdot \nabla_xf^{n+1} - A f^{n+1} = \Gamma_{\pm}(f^n,f^{n+1})+Kf^n,
			&f^{n+1}(0,x,v) = f_0(x,v),\\ 
			&{f^{n+1}}(t,x,v) = {f^{n+1}}(t,x,R_xv)\  \text{  on }v\cdot n(x)<0,
		\end{aligned}\right.
	\end{equation*}for $n=0,1,2,\cdots$, where we set $f^0(t,x,v)=f_0(x,v)$. 
	With Lemma \ref{Lem22}, it is a standard procedure to apply the induction argument to show that there exists $\ve_0>0$ and $T_0>0$ such that if 
	\begin{align*}
		\|{wf_0}\|^2_{L^\infty_{T_0}H^2_xL^2_v}\le \ve_0,
	\end{align*}
	then the approximate solution sequence $\{f^n\}$ is well-defined satisfying 
	\begin{multline*}
		\|wf^{n+1}\|_{L^\infty_{T_0}H^2_xL^2_v}+\|wf^{n+1}\|_{L^2_{T_0}H^2_{x}L^2_{D}}\le \|wf_0\|_{H^2_xL^2_v} + T_0^{1/2}\|wf^n\|_{L^\infty_{T_0}H^2_xL^2_v}\\\le\cdots\le \sum_{n=0}^\infty T_0^{n/2}\|wf_0\|_{H^2_xL^2_v}\lesssim \ve_0,
	\end{multline*}where $T_0$ is chosen to be small enough. 
	Notice that $f^{n+1}-f^n$ solves 
	\begin{multline*}
		\partial_t(f^{n+1}-f^n) + v\cdot \nabla_x(f^{n+1}-f^n)   
		- A (f^{n+1}-f^n)\\ = \Gamma(f^n,f^{n+1}-f^n)+\Gamma(f^n-f^{n-1},f^n)+K(f^n-f^{n-1}),
	\end{multline*}
	in the weak sense, for $n=1,2,3,\cdots$. Using the method for deriving \eqref{158a}, we know that $\{f^{n+1}-f^n\}$ is Cauchy sequence with respect to norms $\|f^{n+1}-f^n\|_{L^\infty_{T_0}H^2_xL^2_v}+\|f^{n+1}-f^n\|_{L^2_{T_0}H^2_{x}L^2_{D}}$. Then the limit function $f(t,x,v)$ is indeed a unique solution to \eqref{1} and \eqref{specular} satisfying estimate \eqref{169}. 
	For the positivity, we can use the argument from \cite[Lemma 8, page 416]{Guo2002a} for Landau case and \cite[Page 833]{Gressman2011} for Boltzmann case; the details are omitted for brevity. The proof of Theorem \ref{localsolution} is completed. 	
	\end{proof}

	 \section{Appendix}\label{Append}
	
	\medskip\noindent{\bf Carleman representation.}  
	Now we have a short review of Carleman representation for Boltzmann equation. One may refer to \cite{Alexandre2000,Global2019} for details. 
	For measurable function $F(v,v_*,v',v'_*)$, if any sides of the following equation is well-defined, then
	\begin{align}
		&\int_{\R^d}\int_{\mathbb{S}^{d-1}}b(\cos\theta)|v-v_*|^\gamma F(v,v_*,v',v'_*)\,d\sigma dv_*\notag\\
		&\quad=\int_{\R^d_h}\int_{E_{0,h}}\tilde{b}(\alpha,h)\1_{|\alpha|\ge|h|}\frac{|\alpha+h|^{\gamma+1+2s}}{|h|^{d+2s}}F(v,v+\alpha-h,v-h,v+\alpha)\,d\alpha dh,\label{Carleman}
	\end{align}where $\tilde{b}(\alpha,h)$ is bounded from below and above by positive constants, and $\tilde{b}(\alpha,h)=\tilde{b}(|\alpha|,|h|)$, $E_{0,h}$ is the hyper-plane orthogonal to $h$ containing the origin. 

	\bibliographystyle{amsplain}
	\bibliography{1}

\end{document}